\numberwithin{equation}{section}
\numberwithin{figure}{section}
\theoremstyle{plain}
\newtheorem{thm}{Theorem}[section]
\theoremstyle{plain}
\newtheorem{lem}[thm]{Lemma}
\theoremstyle{remark}
\newtheorem{rem}[thm]{Remark}
\theoremstyle{plain}
\newtheorem{cor}[thm]{Corollary}
\theoremstyle{definition}
\newtheorem{defn}[thm]{Definition}
\theoremstyle{definition}
\theoremstyle{definition}
\theoremstyle{plain}
\newtheorem{prop}[thm]{Proposition}
\theoremstyle{plain}
\theoremstyle{definition}
\theoremstyle{plain}
\newcommand{\comments}[1]{}
\newcommand{\ra}{\rightarrow}
\newcommand{\rab}{\rangle}
\newcommand{\lra}{\longrightarrow}
\newcommand{\la}{\leftarrow}
\newcommand{\hookuparrow}{\mathrel{\rotatebox[origin=c]{90}{$\hookrightarrow$}}}
\newcommand{\hookdownarrow}{\mathrel{\rotatebox[origin=c]{-90}{$\hookrightarrow$}}}
\newcommand{\lab}{\langle}
\newcommand{\mbb}{\mathbb}
\newcommand{\mcal}{\mathcal}
\newcommand{\mfrak}{\mathfrak}
\newcommand{\N}{\mathbb N}
\newcommand{\Z}{\mathbb Z}
\newcommand{\C}{\mathbb{C}}
\newcommand{\R}{\mathbb{R}}
\newcommand{\F}{\mathbb{F}}
\newcommand{\mscr}{\mathscr}
\newcommand{\vlon}{\varepsilon}
\newcommand{\elon}{\epsilon}
\newcommand{\bplus}{\bigoplus}
\newcommand{\uset}{\underset}
\newcommand{\oset}{\overset}
\newcommand{\oline}{\overline}
\newcommand{\vphi}{\varphi}
\newcommand{\ot}{\otimes}
\newcommand{\btimes}{\boxtimes}
\newcommand{\wt}{\widetilde}
\newcommand*{\LargerCdot}{\raisebox{-0.25ex}{\scalebox{1.2}{$\cdot$}}}
\newcommand{\ID}{\textbf{I}_{\mcal D}}
\newcommand{\IC}{\textbf{I}_{\mcal C}}
\newcommand{\W}{\textbf{W}}
\newcommand{\Irr}{\text{Irr}}
\newcommand{\So}{\textbf{S}}
\newcommand{\Hilb}{\textbf{Hilb}}
\title{Q-system completion of 2-functors}
\author{Mainak Ghosh}
\newcommand{\Contact}{{
		\bigskip
		\footnotesize
				
		Mainak Ghosh, \textsc{Stat-Math Unit, Indian Statistical Institute}\par\nopagebreak
		\textit{E-mail address}: \texttt{main\_ghosh@rediffmail.com}
		
	}}
\begin{document}
	\maketitle
	\global\long\def\vlon{\varepsilon}
	\global\long\def\bt{\bowtie}
	\global\long\def\ul#1{\underline{#1}}
	\global\long\def\ol#1{\overline{#1}}
	\global\long\def\norm#1{\left\|{#1}\right\|}
	\global\long\def\os#1#2{\overset{#1}{#2}}
	\global\long\def\us#1#2{\underset{#1}{#2}}
	\global\long\def\ous#1#2#3{\overset{#1}{\underset{#3}{#2}}}
	\global\long\def\t#1{\text{#1}}
	\global\long\def\lrsuf#1#2#3{\vphantom{#2}_{#1}^{\vphantom{#3}}#2^{#3}}
	\global\long\def\tr{\triangleright}
	\global\long\def\tl{\triangleleft}
	\global\long\def\cc90#1{\begin{sideways}#1\end{sideways}}
	\global\long\def\turnne#1{\begin{turn}{45}{#1}\end{turn}}
	\global\long\def\turnnw#1{\begin{turn}{135}{#1}\end{turn}}
	\global\long\def\turnse#1{\begin{turn}{-45}{#1}\end{turn}}
	\global\long\def\turnsw#1{\begin{turn}{-135}{#1}\end{turn}}
	\global\long\def\fusion#1#2#3{#1 \os{\textstyle{#2}}{\otimes} #3}
	
	\global\long\def\abs#1{\left|{#1}\right	|}
	\global\long\def\red#1{\textcolor{red}{#1}}
\begin{abstract}
	A Q-system is a unitary version of a separable Frobenius algebra object in a C*-tensor category or a C*-2-category. We prove that, for C*-2-categories $\mcal C$ and $\mcal D$, the C*-2-category $\textbf{Fun}(\mcal C, \mcal D)$ of $ * $-$ 2 $-functors, $ * $-$ 2 $-transformations and $ * $-$ 2 $-modifications is Q-system complete, whenever $\mcal D$ is Q-system complete. We use this result to provide a characterisation of Q-system complete categories in terms of $ * $-$ 2 $-functors and to prove that the $ 2 $-category of actions of a unitary fusion category $\mcal C$ on C*-algebras is Q-system complete.
\end{abstract}

\section{Introduction}
The modern theory of subfactors fostered with V. Jones' landmark results in \cite{J83}. The standard invariant of a finite index subfactor of a $II_1$ factor was first defined as a $\lambda$-lattice \cite{P95}. S. Popa proved that amenable finite subfactors of the hyperfinite $II_1$ factor are completely classified by their standard invariant \cite{P95}, which are axiomatized in general by $\lambda$-lattices \cite{P94,P95} or planar algebras \cite{J99}. In the finite depth setting, A. Ocneanu introduced and established the theory of biunitary connection on 4-partite graph as an essential tool for constructing hyperfinite subfactors. Biunitary connections feature in his paragroup axiomatization of finite depth standard invariants \cite{O88,EvKaw} but can also be used to construct infinite depth hyperfinite subfactors from finite graphs (see \cite{P89,Sch}). While the other approaches to standard invariants are now more common, the theory of biunitary connections remains an important ingredient in the construction and classification of hyperfinite subfactors \cite{EvKaw,JMS}.
Many features of subfactor theory now have a clear higher-categorical interpretation \cite{M03,B97,CPJP,JMS,DGGJ}.

\vspace*{2mm}

In \cite{M03}, a Q-system which is a unitary version of a separable Frobenius algebra object in a C*-tensor category or a C*-2-category, is exhibited as an alternative axiomatization of the standard invariant of a finite index subfactor \cite{O88,P95,J99}. Q-systems were first introduced in \cite{Lon} to characterize canonical endomorphism associated to a finite index subfactor of an infinite factor. In the context of C*-2-categories, a Q-system is a 1-cell $_bQ_b \in \mcal C_1(b,b)$ along with two 2-cells $m : Q \boxtimes Q \to Q$ (multiplication) and $i : 1_b \to Q$ (unit), which are graphically denoted by the following:
\[m = \raisebox{-6mm}{
	\begin{tikzpicture}
		\draw[red,in=90,out=90,looseness=2] (-0.5,0.5) to (-1.5,0.5);
		\node at (-1,1.1) {${\color{red}\bullet}$};
		\draw[red] (-1,1.1) to (-1,1.6);
		\node[left,scale=0.7] at (-1,1.4) {$Q$};
		\node[left,scale=0.7] at (-1.6,0.5) {$Q$};
		\node[right,scale=0.7] at (-.5,.5) {$Q$};
\end{tikzpicture}} \ \ \ \ \ \ i = \raisebox{-6mm}{
	\begin{tikzpicture}
		\draw [red] (-0.8,-.6) to (-.8,.6);
		\node at (-.8,-.6) {${\color{red}\bullet}$};
		\node[left,scale=0.7] at (-.8,0) {$Q$};
\end{tikzpicture}} \ \ \ \ \ \ m^* = \raisebox{-6mm}{
	\begin{tikzpicture}
		\draw[red,in=-90,out=-90,looseness=2] (-0.5,0.5) to (-1.5,0.5);
		\node at (-1,-.1) {${\color{red}\bullet}$};
		\draw[red] (-1,-.1) to (-1,-.6);
		\node[left,scale=0.7] at (-1,-.4) {$Q$};
		\node[left,scale=0.7] at (-1.6,0.5) {$Q$};
		\node[right,scale=0.7] at (-.5,.5) {$Q$};
\end{tikzpicture}} \ \ \ \ \ \ i^* = \raisebox{-6mm}{
	\begin{tikzpicture}
		\draw [red] (-0.8,-.6) to (-.8,.6);
		\node at (-.8,.6) {${\color{red}\bullet}$};
		\node[left,scale=0.7] at (-.8,0) {$Q$};
\end{tikzpicture}} \]
These $2$-cells satisfy the following:
\[\raisebox{-6mm}{
	\begin{tikzpicture}
		\draw[red,in=90,out=90,looseness=2] (0,0) to (1,0);
		\draw[red,in=90,out=90,looseness=2] (0.5,.6) to (-.5,.6);
		\draw[red] (-.5,.6) to (-.5,0);
		\node at (.5,.6) {$\red{\bullet}$};
		\node at (0,1.2) {$\red{\bullet}$};
		\draw[red] (0,1.2) to (0,1.6);
\end{tikzpicture}}
= \raisebox{-6mm}{\begin{tikzpicture}
		\draw[red,in=90,out=90,looseness=2] (0,0) to (1,0);
		\draw[red,in=90,out=90,looseness=2] (.5,.6) to (1.5,.6);
		\node at (.5,.6) {$\red{\bullet}$};
		\draw[red] (1.5,.6) to (1.5,0);
		\node at (1,1.2) {$\red{\bullet}$};
		\draw[red] (1,1.2) to (1,1.6);
\end{tikzpicture}} \ \t{(associativity)} \ \ \ \ \ \ \raisebox{-4mm}{
	\begin{tikzpicture}
		\draw[red,in=90,out=90,looseness=2] (0,0) to (1,0);
		\node at (.5,.6) {$\red{\bullet}$};
		\node at (0,0) {$\red{\bullet}$};
		\draw[red] (.5,.6) to (.5,1.2);
\end{tikzpicture}}=
\raisebox{-4mm}{
	\begin{tikzpicture}
		\draw[red,in=90,out=90,looseness=2] (0,0) to (1,0);
		\node at (.5,.6) {$\red{\bullet}$};
		\node at (1,0) {$\red{\bullet}$};
		\draw[red] (.5,.6) to (.5,1.2);
\end{tikzpicture}}
=
\raisebox{-2mm}{
	\begin{tikzpicture}
		\draw[red] (0,0) to (0,1.2);
\end{tikzpicture}} \ \t{(unitality)} \]

\[\raisebox{-8mm}{
	\begin{tikzpicture}
		\draw[red,in=90,out=90,looseness=2] (0,0) to (1,0);
		\draw[red,in=-90,out=-90,looseness=2] (1,0) to (2,0);
		\node at (.5,.6) {$\red{\bullet}$};
		\node at (1.5,-.6) {$\red{\bullet}$};
		\draw[red] (.5,.6) to (.5,1.2);
		\draw[red] (1.5,-.6) to (1.5,-1.2);
		\draw[red] (0,0) to (0,-.6);
		\draw[red] (2,0) to (2,.6);
\end{tikzpicture}} =
\raisebox{-6mm}{
	\begin{tikzpicture}
		\draw[red,in=90,out=90,looseness=2] (0,0) to (1,0);
		\node at (.5,.6) {$\red{\bullet}$};
		\draw[red] (.5,.6) to (.5,1.2);
		\draw[red,in=-90,out=-90,looseness=2] (0,1.8) to (1,1.8);
		\node at (.5,1.2) {$\red{\bullet}$};
\end{tikzpicture}} =
\raisebox{-8mm}{
	\begin{tikzpicture}
		\draw[red,in=-90,out=-90,looseness=2] (0,0) to (1,0);
		\draw[red,in=90,out=90,looseness=2] (1,0) to (2,0);
		\node at (.5,-.6) {$\red{\bullet}$};
		\node at (1.5,.6) {$\red{\bullet}$};
		\draw[red] (.5,-.6) to (.5,-1.2);
		\draw[red] (1.5,.6) to (1.5,1.2);
		\draw[red] (0,0) to (0,.6);
		\draw[red] (2,0) to (2,-.6);
\end{tikzpicture}} \ \t{(Frobenius condition)} \ \ \ \ \ \ \raisebox{-6mm}{
	\begin{tikzpicture}
		\draw[red,in=90,out=90,looseness=2] (0,0) to (1,0);
		\draw[red,in=-90,out=-90,looseness=2] (0,0) to (1,0);
		\node at (.5,-.6) {$\red{\bullet}$};
		\node at (.5,.6) {$\red{\bullet}$};
		\draw[red] (.5,-.6) to (.5,-1.2);
		\draw[red] (.5,.6) to (.5,1.2);
\end{tikzpicture}} = 
\raisebox{-6mm}{
	\begin{tikzpicture}
		\draw[red] (0,0) to (0,2.4);
\end{tikzpicture}} \ \t{(Separability)} \]

\vspace*{2mm}

In \cite{CPJP} the notion of \textit{Q-system completion} for C*/W*-2-categories, which is another version of a higher idempotent completion for C*/W*-2-categories in comparison with $ 2 $-categories of separable monads \cite{DR18} and condensation monads \cite{GJF19}, has been introduced, to induce actions of unitary fusion categories on C*-algebras. In the recent years, \textit{Q-system completion} has gained considerable interest \cite{CP,CPJ,G}
 
Given a C*/W*-2-category $\mcal C$, which is also locally orthogonal projection complete, it's \textit{Q-system completion} is the 2-category $\textbf{QSys}(\mcal C)$ of Q-systems, bimodules and intertwiners in $\mcal C$ . There is a canonical inclusion *-2-functor $\iota_{\mcal C} : \mcal C \hookrightarrow \textbf{QSys}(\mcal C)$ which is always an equivalence on all hom categories. $\mcal C$ is said to be \textit{Q-system complete} if $\iota_{\mcal C}$ is a *-equivalence of *-2-categories. It has been established in \cite{CPJP} that $\mcal C$ is \textit{Q-system complete} if and only if every Q-system `splits' in $\mcal C$.

\vspace*{2mm}

In this paper, we study Q-system completeness of the $ 2 $-category $\textbf{Fun}(\mcal C, \mcal D)$ of $ * $-$ 2 $-functors, $ * $-$ 2 $-transformations, and $ 2 $-modifications,  where $\mcal C$ and $\mcal D$ are strict C*-2-categories. The following is the main theorem of the paper.

\begin{thm}\label{maintheorem}
	Suppose $\mcal C$, $\mcal D$ are strict C*-2-categories and $\mcal D$ is Q-system complete. Then $\normalfont\textbf{Fun}(\mcal C, \mcal D)$ is Q-system complete.
\end{thm}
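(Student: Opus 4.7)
The plan is to apply the splitting criterion for Q-system completeness recalled from \cite{CPJP} in the introduction: a C*-2-category is Q-system complete if and only if every Q-system in it splits. Accordingly, I would take an arbitrary Q-system $\mathbb{Q} = (F, Q, m, i)$ in $\textbf{Fun}(\mcal C, \mcal D)$ and exhibit a splitting, i.e.\ an object $G$ of $\textbf{Fun}(\mcal C, \mcal D)$ together with $*$-$2$-transformations $\alpha : F \Rightarrow G$ and $\beta : G \Rightarrow F$ realising $\mathbb{Q}$ as the canonical Q-system associated to the composite $\beta \circ \alpha$, with $\alpha \circ \beta$ the identity on $G$.

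First I would unpack $\mathbb{Q}$ pointwise. The $*$-$2$-transformation $Q : F \Rightarrow F$ produces, for each object $c \in \mcal C_0$, a $1$-cell $Q_c \in \mcal D_1(F(c), F(c))$, and for each $1$-cell $f : c \to c'$ in $\mcal C$, a unitary $2$-cell $Q_f : Q_{c'} F(f) \Rightarrow F(f) Q_c$ subject to naturality under horizontal composition and whiskering by $2$-cells of $\mcal C$. The modifications $m$ and $i$ have components $m_c, i_c$ promoting each $Q_c$ to a Q-system in the C*-tensor category $\mcal D_1(F(c), F(c))$, and the modification axioms amount to the statement that each $Q_f$ intertwines these Q-system structures on $Q_{c'} F(f)$ and $F(f) Q_c$.

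Using Q-system completeness of $\mcal D$, I would then choose for each $c \in \mcal C_0$ a splitting of $(F(c), Q_c, m_c, i_c)$: an object $X_c \in \mcal D_0$ together with $1$-cells $\alpha_c : F(c) \to X_c$ and $\beta_c : X_c \to F(c)$ exhibiting $Q_c$ as the canonical Q-system on $\beta_c \alpha_c$ and with $\alpha_c \beta_c \cong 1_{X_c}$. I would define the splitting $*$-$2$-functor $G : \mcal C \to \mcal D$ by $G(c) := X_c$ on objects; on a $1$-cell $f : c \to c'$ of $\mcal C$ I take $G(f) \in \mcal D_1(X_c, X_{c'})$ to be the image of $F(f)$, viewed as a $(Q_{c'}, Q_c)$-bimodule with bimodule structure assembled from $Q_f$ together with $m_{c'}$ and $m_c$, under the equivalences of hom-categories induced by the splittings; equivalently, $G(f)$ is an internal projection on $\alpha_{c'} F(f) \beta_c$. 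Horizontal composition of $2$-cells is transported in the same manner, and the components $\alpha_c, \beta_c$ assemble into $*$-$2$-transformations $\alpha$ and $\beta$ whose coherence $2$-cells are built from the $Q_f$'s and the splitting data.

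The remaining verification has three parts: that $G$ is a strict $*$-$2$-functor, that $\alpha$ and $\beta$ are $*$-$2$-transformations, and that $\beta \circ \alpha$ agrees with $\mathbb{Q}$ as a Q-system while $\alpha \circ \beta$ equals the identity transformation of $G$. Each reduces to a diagrammatic identity that follows from the Q-system axioms for the $Q_c$'s, the separability and unitality inherent in the splittings, and the naturality of the $Q_f$'s. The main obstacle I anticipate is the strict functoriality of $G$: a naive definition $G(f) := \alpha_{c'} F(f) \beta_c$ does not strictly preserve composition, so the bimodule or internal-projection viewpoint is needed to identify $G(g) G(f)$ with $G(gf)$ on the nose, using the Frobenius and separability identities. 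A clean abstract alternative is to establish the equivalence $\textbf{QSys}(\textbf{Fun}(\mcal C, \mcal D)) \simeq \textbf{Fun}(\mcal C, \textbf{QSys}(\mcal D))$, whose left-to-right direction is the pointwise unpacking and whose inverse is the construction above; combined with the equivalence $\mcal D \simeq \textbf{QSys}(\mcal D)$ supplied by Q-system completeness of $\mcal D$, this yields $\textbf{Fun}(\mcal C, \mcal D) \simeq \textbf{QSys}(\textbf{Fun}(\mcal C, \mcal D))$, i.e.\ the theorem.
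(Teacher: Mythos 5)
Your proposal matches the paper's proof in essentially every respect: split each pointwise Q-system $\psi_a$ using Q-system completeness of $\mcal D$, define $G$ on $1$-cells by splitting the induced projection on $x_b \boxtimes F(X) \boxtimes \ol{x}_a$ (your ``internal projection on $\alpha_{c'}F(f)\beta_c$''), assemble the splitting $1$-cells into a dualizable $*$-$2$-transformation $\phi$ with dual $\ol\phi$, and check that the intertwining unitaries $\gamma^{(a)}$ form a $2$-modification giving $\psi \cong \ol\phi \otimes \phi$ as Q-systems. The one small correction is that $G$ need not be strict: the paper equips it with unitary tensorators $G^2_{X,Y}$ rather than forcing $G(g)G(f) = G(gf)$ on the nose, so the strictness obstacle you anticipate does not arise.
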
   
Using the theorem, we provide a provide a characterisation of \textit{Q-system complete} categories in terms of $ 2 $-functors. In \cite{CPJ}, the $ 2 $-category of actions of a unitary fusion category $\mcal C$ on C*-algebras $\textbf{C*Alg}_{\mcal C}$, has been introduced to study inductive limit actions of fusion categories on AF-algebras. As a consequence of \Cref{maintheorem}, we have the following result.

\begin{cor}\label{C*Algcor}
	$\normalfont \textbf{C*Alg}_{\mcal C}$ is Q-system complete
\end{cor}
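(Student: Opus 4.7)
The plan is to realize $\textbf{C*Alg}_{\mcal C}$ as a functor 2-category of the form $\textbf{Fun}(\mcal B, \mcal D)$ for a suitable strict C*-2-category $\mcal B$ built from $\mcal C$ and a Q-system complete target $\mcal D$, and then apply \Cref{maintheorem}.

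First, I would let $\mcal B = B\mcal C$ denote the one-object strict C*-2-category whose unique hom category is $\mcal C$, with horizontal composition given by the monoidal product of $\mcal C$ (this is just the delooping of the unitary fusion category). By unpacking the definitions introduced in \cite{CPJ}, an action of $\mcal C$ on a C*-algebra $A$ is exactly the data of a *-2-functor $B\mcal C \to \textbf{C*Alg}$ sending the unique object to $A$; a $\mcal C$-equivariant bimodule between two such actions is exactly a *-2-transformation; and a $\mcal C$-equivariant intertwiner is a *-2-modification. Thus there is a canonical *-equivalence of C*-2-categories
\[
\textbf{C*Alg}_{\mcal C} \;\simeq\; \textbf{Fun}(B\mcal C, \textbf{C*Alg}).
\]
Checking this identification carefully (matching the coherence data on each side) is the main bookkeeping step.

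Next, I would invoke the fact, established in \cite{CPJP}, that $\textbf{C*Alg}$ is Q-system complete: every Q-system in $\textbf{C*Alg}$ splits, essentially because a Q-system on a C*-algebra $A$ is the data of an extension to a larger C*-algebra $B \supseteq A$ with a finite-index conditional expectation. Combined with the fact that $B\mcal C$ is a strict C*-2-category, \Cref{maintheorem} applied to $\mcal C \leadsto B\mcal C$ and $\mcal D \leadsto \textbf{C*Alg}$ yields that $\textbf{Fun}(B\mcal C, \textbf{C*Alg})$ is Q-system complete.

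Finally, since Q-system completeness is evidently preserved under *-equivalence of C*-2-categories (a Q-system in one splits iff its image in the equivalent 2-category splits), the equivalence from the first step transports the Q-system completeness of $\textbf{Fun}(B\mcal C, \textbf{C*Alg})$ to $\textbf{C*Alg}_{\mcal C}$. The main obstacle I anticipate is not in the application of \Cref{maintheorem} itself, but rather in writing down the equivalence of C*-2-categories $\textbf{C*Alg}_{\mcal C} \simeq \textbf{Fun}(B\mcal C, \textbf{C*Alg})$ in a way that matches the precise definitions of \cite{CPJ}; everything else is an immediate invocation of the main theorem and the known Q-system completeness of $\textbf{C*Alg}$.
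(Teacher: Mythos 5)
Your proposal is correct and follows essentially the same route as the paper: the paper's proof simply cites \cite[Remark 3.3]{CPJ} for the identification $\textbf{C*Alg}_{\mcal C} = \textbf{Fun}(\t{B}\mcal C, \textbf{C*Alg})$ and then applies \Cref{maintheorem} with $\textbf{C*Alg}$ as the Q-system complete target. The only difference is cosmetic: the paper treats the identification as an equality taken from the literature rather than an equivalence to be verified, so the bookkeeping step you flag is already discharged by the citation.
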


If $\mcal C , \mcal D$ are C*-2-categories, then it has been shown in \cite{CP} that $\textbf{Fun}(\mcal C, \mcal D)$ is also a C*-2-category. In \Cref{prelim}, we show that $\textbf{Fun}(\mcal C, \mcal D)$ is locally orthogonal projection complete when $\mcal D$ is so. Let $\left(\psi_\bullet, m_\bullet , i_\bullet \right)$ be a Q-system in $\t{End}(F)$ for some $*$-2-functor $F : \mcal C \to \mcal D$ where, $m : \psi \otimes \psi \Rrightarrow \psi$ is the multiplication map and $i : 1_{F} \Rrightarrow \psi$ is the unit map. This means that, for every $a \in \mcal C_0$ we have a Q-system $\left(\psi_a,m_a,i_a\right)$ in $\t{End}_{\mcal D_1} (Fa)$. To prove that $\textbf{Fun}(\mcal C, \mcal D)$ is Q-system complete (assuming $\mcal D$ is Q-system complete) we need to show that there is a $1$-cell $\phi \in \textbf{Fun}(\mcal C,\mcal D)_1 (F,G)$ , for some $*$-2-functor $G : \mcal C \to \mcal D$, having a unitarily separable dual $\ol \phi \in \textbf{Fun}(\mcal C,\mcal D)_1(G,F)$ such that the $1$-cells $\psi$ and $\ol \phi \otimes \phi$ are isomorphic as Q-systems. In order to achieve this the following tasks need to be completed:
\begin{itemize}
	\item[(1)] We need to find a suitable $*$-2-functor $G : \mcal C \to \mcal D \ $.
	\item[(2)] To find a dualizable $1$-cell $\phi \in \textbf{Fun}(\mcal C,\mcal D)_1 (F,G) \ $.
	\item[(3)] To find a 2-modification $\gamma : \ol \phi \otimes \phi \Rrightarrow \psi$, which is a unitary and intertwines the algebra maps.
\end{itemize}
We proceed with $(1)$ in \Cref{2functor}. \Cref{2transformation} is regarding $(2)$. In \Cref{2modification}, we establish $(3)$ and discuss the consequences of \Cref{maintheorem} 

\subsection*{Acknowledgements}
The author would like to thank Shamindra Kumar Ghosh, Corey Jones and David Penneys for several fruitful discussions.

\section{Preliminaries}\label{prelim}
In this section we will furnish the necessary background on \textit{Q-system completion} and the 2-category of $ * $-$ 2 $-functors $\textbf{Fun}(\mcal C, \mcal D)$, where $\mcal C, \mcal D$ are C*-2-categories .

\subsection{Notations related to 2-categories}\label{graphcalc}
We refer the reader to \cite{JY21} for basics of 2-categories.\\
Suppose $\mcal C$ is a 2-category and  $a,b \in \mcal C_0$ be two $0$-cells. A $1$-cell from $a  \xrightarrow{X} b$ is denoted by $_bX_a$. Pictorially, a $1$-cell will be denoted by a strand and a $2$-cell will be denoted by a box with strings with passing through it. Suppose we have two $1$-cells $X, Y \in \mcal C_1(a,b)$ and $f \in \mcal C_2(X,Y)$ be a $2$-cell. Then we will denote $f$ as 
\raisebox{-11mm}{
	\begin{tikzpicture}
		\draw (0,.8) to (0,-.8);
		\node[draw,thick,rounded corners,fill=white, minimum width=20] at (0,0) {$f$};
		\node[right] at (0,-.7) {$X$};
		\node[right] at (0,.7) {$Y$};
\end{tikzpicture}}
We write tensor product $\boxtimes$ of $1$-cells from right to left $ _cY \us{b}\boxtimes X_a$.
$ 1 $-cells which are Q-systems will be denoted by red strands.
The notion of C*-2-categories is believed to first appear in \cite{LR}. For basics of C*/W*-2-categories we refer the reader to \cite{CPJP,GLR85}.

\subsection{Q-system completion}\

Let $\mcal C$ be a C*-2-category.
\begin{defn} \label{Qsysdefn}
	A Q-system in $\mcal C$ is a $1$-cell $_bQ_b \in \mcal C_1(b,b)$ along with multiplication map $m \in \mcal C_2(Q \boxtimes_b Q, Q)$ and unit map $i \in \mcal C_2(1_b,Q)$, as mentioned in Section 1, satisfying the following properties:
\end{defn}
\begin{itemize}
	\item[(Q1)]
	\raisebox{-6mm}{
		\begin{tikzpicture}
			\draw[red,in=90,out=90,looseness=2] (0,0) to (1,0);
			\draw[red,in=90,out=90,looseness=2] (0.5,.6) to (-.5,.6);
			\draw[red] (-.5,.6) to (-.5,0);
			\node at (.5,.6) {$\red{\bullet}$};
			\node at (0,1.2) {$\red{\bullet}$};
			\draw[red] (0,1.2) to (0,1.6);
	\end{tikzpicture}}
	= \raisebox{-6mm}{\begin{tikzpicture}
			\draw[red,in=90,out=90,looseness=2] (0,0) to (1,0);
			\draw[red,in=90,out=90,looseness=2] (.5,.6) to (1.5,.6);
			\node at (.5,.6) {$\red{\bullet}$};
			\draw[red] (1.5,.6) to (1.5,0);
			\node at (1,1.2) {$\red{\bullet}$};
			\draw[red] (1,1.2) to (1,1.6);
	\end{tikzpicture}} \ \ \ \ \ \ \t{(associativity)} 
	
	\item[(Q2)]
	\raisebox{-4mm}{
		\begin{tikzpicture}
			\draw[red,in=90,out=90,looseness=2] (0,0) to (1,0);
			\node at (.5,.6) {$\red{\bullet}$};
			\node at (0,0) {$\red{\bullet}$};
			\draw[red] (.5,.6) to (.5,1.2);
	\end{tikzpicture}} = 
	\raisebox{-4mm}{
		\begin{tikzpicture}
			\draw[red,in=90,out=90,looseness=2] (0,0) to (1,0);
			\node at (.5,.6) {$\red{\bullet}$};
			\node at (1,0) {$\red{\bullet}$};
			\draw[red] (.5,.6) to (.5,1.2);
	\end{tikzpicture}}
	= 
	\raisebox{-3mm}{
		\begin{tikzpicture}
			\draw[red] (0,0) to (0,1.2);
	\end{tikzpicture}} \ \ \ \ \ \ \t{(unitality)} 
	
	\item[(Q3)]
	\raisebox{-10mm}{
		\begin{tikzpicture}
			\draw[red,in=90,out=90,looseness=2] (0,0) to (1,0);
			\draw[red,in=-90,out=-90,looseness=2] (1,0) to (2,0);
			\node at (.5,.6) {$\red{\bullet}$};
			\node at (1.5,-.6) {$\red{\bullet}$};
			\draw[red] (.5,.6) to (.5,1);
			\draw[red] (1.5,-.6) to (1.5,-1);
			\draw[red] (0,0) to (0,-1);
			\draw[red] (2,0) to (2,1);
	\end{tikzpicture}} \ =
	\raisebox{-10mm}{
		\begin{tikzpicture}
			\draw[red,in=90,out=90,looseness=2] (0,0) to (1,0);
			\node at (.5,.6) {$\red{\bullet}$};
			\draw[red] (.5,.6) to (.5,1.4);
			\draw[red,in=-90,out=-90,looseness=2] (0,2) to (1,2);
			\node at (.5,1.4) {$\red{\bullet}$};
	\end{tikzpicture}} =
	\raisebox{-10mm}{
		\begin{tikzpicture}
			\draw[red,in=-90,out=-90,looseness=2] (0,0) to (1,0);
			\draw[red,in=90,out=90,looseness=2] (1,0) to (2,0);
			\node at (.5,-.6) {$\red{\bullet}$};
			\node at (1.5,.6) {$\red{\bullet}$};
			\draw[red] (.5,-.6) to (.5,-1);
			\draw[red] (1.5,.6) to (1.5,1);
			\draw[red] (0,0) to (0,1);
			\draw[red] (2,0) to (2,-1);
	\end{tikzpicture}} \ \ \ \ \ \ \t{(Frobenius condition)} 
	
	\item[(Q4)]
	\raisebox{-10mm}{
		\begin{tikzpicture}
			\draw[red,in=90,out=90,looseness=2] (0,0) to (1,0);
			\draw[red,in=-90,out=-90,looseness=2] (0,0) to (1,0);
			\node at (.5,-.6) {$\red{\bullet}$};
			\node at (.5,.6) {$\red{\bullet}$};
			\draw[red] (.5,-.6) to (.5,-1.2);
			\draw[red] (.5,.6) to (.5,1.2);
	\end{tikzpicture}} = 
	\raisebox{-10mm}{
		\begin{tikzpicture}
			\draw[red] (0,0) to (0,2.4);
	\end{tikzpicture}} \ \ \ \ \ \ \t{(Separability)}
	
\end{itemize}
\begin{rem}\label{Qdual}
	A Q-system $Q$ is a self-dual 1-cell with $ev_Q \coloneqq 
	\raisebox{-4mm}{
		\begin{tikzpicture}[scale=.8]
			\draw[red,in=-90,out=-90,looseness=2] (-0.5,0.5) to (-1.5,0.5);
			\node at (-1,-.1) {${\color{red}\bullet}$};
			\draw[red] (-1,-.1) to (-1,-.6);
			\node[left,scale=0.7] at (-1,-.4) {$Q$};
			\node[left,scale=0.7] at (-1.6,0.4) {$Q$};
			\node[right,scale=0.7] at (-.5,.4) {$Q$};
			\node at (-1,-.6) {${\color{red}\bullet}$};
	\end{tikzpicture}}
	$ and $coev_Q \coloneqq \raisebox{-4mm}{
		\begin{tikzpicture}[scale=.8]
			\draw[red,in=90,out=90,looseness=2] (-0.5,0.5) to (-1.5,0.5);
			\node at (-1,1.1) {${\color{red}\bullet}$};
			\draw[red] (-1,1.1) to (-1,1.6);
			\node[left,scale=0.7] at (-1,1.4) {$Q$};
			\node[left,scale=0.7] at (-1.6,0.6) {$Q$};
			\node[right,scale=0.7] at (-.5,.6) {$Q$};
			\node at (-1,1.6) {${\color{red}\bullet}$};
	\end{tikzpicture}}$
\end{rem}
\begin{defn}
	Suppose $\mcal C$ is a C*-2-category and $_bX_a \in \mcal C_1(a,b)$. A \textit{unitarily separable left dual} for $_b X_a$ is a dual $\left( _a \ol X_b, ev_X, coev_X \right)$ such that $ev_X \circ ev_X^* = \t{id}_{1_a}$ (cf. \cite[Example 3.9]{CPJP}).
\end{defn}
Given a unitarily separable left dual for $_bX_a \in \mcal C_1(a,b)$, $_b X \us{a} \boxtimes \ol X_b \in \mcal C_1(b,b)$ is a Q-system with multiplication map $m \coloneqq \t{id}_X \boxtimes ev_X \boxtimes \t{id}_{\ol X}$ and unit map $i \coloneqq coev_X$.

Given a Q-system $Q \in \mcal C_1(b,b)$, if it is of the above form,  then we say that the Q-system $Q$ `splits'.

\begin{defn}
	Suppose $\mcal C$ is a C*-2-category. $\mcal C$ is said to be \textit{locally orthogonal projection complete} if, given any projection $p \in \t{End}(X)$ for a $ 1 $-cell $X \in \mcal C_1(a,b)$, there is another $ 1 $-cell $Y \in \mcal C_1(a,b)$ and an isometry $v \in \mcal C_2(Y,X)$ such that $v v^* = p$ .  
\end{defn}
Throughout the paper by a C*-2-category we will mean a C*-2-category which is locally orthogonal projection complete unless specified otherwise.

\begin{defn}\cite{CPJP}
	Suppose $P \in \mcal C_1(a,a)$ and $Q \in \mcal C_1(b,b)$ are Q-systems. A $Q-P$ bimodule is a triple $\left( X, \lambda_X, \rho_X \right)$ consisting of $_bX_a \in \mcal C_1(a,b)$, $\lambda_X \in \mcal C_2 \left(Q \boxtimes X, X\right)$ and $\rho_X \in \mcal C_2(X \boxtimes P, X)$ satisfying certain properties. We represent $X, P, Q$ graphically by purple, red and magenta strands respectively. We denote $\lambda_X, \rho_X, \lambda_X^*, \rho_X^*$ as follows:
	\[\lambda_X \ = \ \raisebox{-2mm}{\begin{tikzpicture}
			\draw[purple] (0,0) to (0,.6);
			\draw[red][in=120,out=90,looseness=1] (-.5,0) to (0,.3);
			\node[scale=.8] at (0,.3) {$\color{purple}{\bullet}$};
	\end{tikzpicture}} \ \ \ \ 
	\rho_X \ = \ \raisebox{-2mm}{\begin{tikzpicture}
			\draw[purple] (0,0) to (0,.6);
			\draw[magenta][in=60,out=90,looseness=1] (.5,0) to (0,.3);
			\node[scale=.8] at (0,.3) {$\color{purple}{\bullet}$};
	\end{tikzpicture}} \ \ \ \
	\lambda_X^* \ = \ \raisebox{-2mm}{\begin{tikzpicture}[rotate=180]
			\draw[purple] (0,0) to (0,.6);
			\draw[red][in=60,out=90,looseness=1] (.5,0) to (0,.3);
			\node[scale=.8] at (0,.3) {$\color{purple}{\bullet}$};
	\end{tikzpicture}} \ \ \ \
	\rho_X^* \ = \ \raisebox{-2mm}{\begin{tikzpicture}[rotate=180]
			\draw[purple] (0,0) to (0,.6);
			\draw[magenta][in=120,out=90,looseness=1] (-.5,0) to (0,.3);
			\node[scale=.8] at (0,.3) {$\color{purple}{\bullet}$};
	\end{tikzpicture}}  \]
	The bimodule axioms are as follows:
	\begin{itemize}
		\item[(B1)] \raisebox{-4mm}{\begin{tikzpicture}
				\draw[purple] (0,0) to (0,1.2);
				\draw[red][in=120,out=90,looseness=1] (-.5,0) to (0,.5);
				\draw[red][in=120,out=90,looseness=1] (-.7,0) to (0,.7);
				\node[scale=.8] at (0,.52) {$\color{purple}{\bullet}$};
				\node[scale=.8] at (0,.72) {$\color{purple}{\bullet}$};
				\node at (.6,.6) {$=$};
				\draw[red,in=90,out=90,looseness=2] (1,0) to (1.5,0);
				\node[scale=.7] at (1.25,.27) {$\red{\bullet}$};
				\draw[red,in=120,out=90,looseness=1] (1.25,.27) to (1.75,.57);
				\draw[purple] (1.75,0) to (1.75,1.2);
				\node[scale=.8] at (1.75,.59) {$\color{purple}{\bullet}$};
		\end{tikzpicture}} \hspace*{2mm}, \hspace*{2mm}
		\raisebox{-4mm}{\begin{tikzpicture}
				\draw[purple] (0,0) to (0,1.2);
				\draw[magenta][in=60,out=90,looseness=1] (.5,0) to (0,.5);
				\draw[magenta][in=60,out=90,looseness=1] (.7,0) to (0,.7);
				\node[scale=.8] at (0,.52) {$\color{purple}{\bullet}$};
				\node[scale=.8] at (0,.72) {$\color{purple}{\bullet}$};
				\node at (1,.6) {$=$};
				\draw[purple] (1.3,0) to (1.3,1.2);
				\draw[magenta,in=90,out=90,looseness=2] (1.55,0) to (2.05,0);
				\draw[magenta,in=60,out=90,looseness=1] (1.8,.27) to (1.3,.57);
				\node[scale=.7] at (1.8,.27) {$\color{magenta}{\bullet}$};
				\node[scale=.8] at (1.3,.57) {$\color{purple}{\bullet}$};
		\end{tikzpicture}} \hspace*{2mm} and \hspace*{2mm}
		\raisebox{-4mm}{\begin{tikzpicture}
				\draw[purple] (0,0) to (0,1);
				\draw[red][in=120,out=90,looseness=1] (-.5,0) to (0,.6);
				\draw[magenta][in=60,out=90,looseness=1] (.5,0) to (0,.3);
				\node[scale=.8] at (0,.6) {$\color{purple}{\bullet}$};
				\node[scale=.8] at (0,.3) {$\color{purple}{\bullet}$};
				\node at (.7,.6) {$=$};
				\draw[purple] (1.5,0) to (1.5,1);
				\draw[red][in=120,out=90,looseness=1] (1,0) to (1.5,.3);
				\draw[magenta][in=60,out=90,looseness=1] (2,0) to (1.5,.6);
				\node[scale=.8] at (1.5,.6) {$\color{purple}{\bullet}$};
				\node[scale=.8] at (1.5,.3) {$\color{purple}{\bullet}$};
		\end{tikzpicture}} \hspace*{2mm} (associativity)
		\item[(B2)] \raisebox{-4mm}{\begin{tikzpicture}
				\draw[purple] (0,0) to (0,1);
				\draw[red][in=120,out=90,looseness=1] (-.5,.2) to (0,.6);
				\node[scale=.7] at (-.5,.2) {$\red{\bullet}$};
				\node[scale=.8] at (0,.6) {$\color{purple}{\bullet}$};
				\node at (.4,.6) {$=$};
				\draw[purple] (.8,0) to (.8,1);
				\node at (1.2,.6) {$=$};
				\draw[purple] (1.6,0) to (1.6,1);
				\draw[magenta][in=60,out=90,looseness=1] (2.1,.2) to (1.6,.6);
				\node[scale=.7] at (2.1,.2) {$\color{magenta}{\bullet}$};
				\node[scale=.8] at (1.6,.6) {$\color{purple}{\bullet}$};
		\end{tikzpicture}} \hspace*{2mm} (unitality)
		
		\item[(B3)] \raisebox{-6mm}{\begin{tikzpicture}
				\draw[purple] (0,0) to (0,1.2);
				\draw[red][in=-120,out=-90,looseness=1] (-.3,.5) to (0,.3);
				\draw[red,in=90,out=90,looseness=2] (-.7,.5) to (-.3,.5);
				\draw[red] (-.7,.5) to (-.7,0);
				\draw[red] (-.5,.7) to (-.5,1);
				\node[scale=.7] at (-.5,.72) {$\red{\bullet}$};
				\node[scale=.8] at (0,.3) {$\color{purple}{\bullet}$};
				\node at (.4,.6) {$=$};
				\draw[purple] (1.2,0) to (1.2,1.2);
				\draw[red][in=-120,out=-90,looseness=1] (.8,1.2) to (1.2,.8);
				\draw[red][in=120,out=90,looseness=1] (.8,0) to (1.2,.4);
				\node[scale=.8] at (1.2,.8) {$\color{purple}{\bullet}$};
				\node[scale=.8] at (1.2,.4) {$\color{purple}{\bullet}$};
				\node at (1.6,.6) {$=$};
				\draw[purple] (2.8,0) to (2.8,1.2);
				\draw[red][in=120,out=90,looseness=1] (2.4,.4) to (2.8,.8);
				\draw[red,in=-90,out=-90,looseness=2] (2.4,.4) to (2,.4);
				\draw[red] (2,.4) to (2,1);
				\draw[red] (2.2,.2) to (2.2,-.1);
				\node[scale=.7] at (2.2,.15) {$\red{\bullet}$};
				\node[scale=.8] at (2.8,.8) {$\color{purple}{\bullet}$};
		\end{tikzpicture}} \hspace*{2mm} and \hspace*{2mm}
		\raisebox{-6mm}{\begin{tikzpicture}
				\draw[purple] (0,0) to (0,1.2);
				\draw[magenta][in=-60,out=-90,looseness=1] (.3,.5) to (0,.3);
				\draw[magenta][in=90,out=90,looseness=2] (.3,.5) to (.7,.5);
				\draw[magenta] (.5,.7) to (.5,1); 
				\draw[magenta] (.7,.5) to (.7,0);
				\node[scale=.8] at (0,.3) {$\color{purple}{\bullet}$};
				\node[scale=.7] at (.5,.7) {$\color{magenta}{\bullet}$};
				\node at (1.1,.6) {$=$};
				\draw[purple] (1.5,0) to (1.5,1.2);
				\draw[magenta][in=-60,out=-90,looseness=1] (1.9,1.2) to (1.5,.8);
				\draw[magenta][in=60,out=90,looseness=1] (1.9,0) to (1.5,.4);
				\node[scale=.8] at (1.5,.8) {$\color{purple}{\bullet}$};
				\node[scale=.8] at (1.5,.4) {$\color{purple}{\bullet}$};
				\node at (2.3,.6) {$=$};
				\draw[purple] (2.7,0) to (2.7,1.2);
				\draw[magenta][in=60,out=90,looseness=1] (3,.4) to (2.7,.7);
				\draw[magenta,in=-90,out=-90,looseness=2] (3,.4) to (3.4,.4);
				\draw[magenta] (3.4,.4) to (3.4,1);
				\draw[magenta] (3.2,.2) to (3.2,0);
				\node[scale=.7] at (3.2,.15) {$\color{magenta}{\bullet}$};
				\node[scale=.8] at (2.7,.7) {$\color{purple}{\bullet}$}; 
		\end{tikzpicture}} \hspace*{2mm} (frobenius condition)
		
		\item[(B4)] \raisebox{-6mm}{\begin{tikzpicture}
				\draw[purple] (0,0) to (0,1.2);
				\draw[red][in=-120,out=-90,looseness=1] (-.3,.5) to (0,.3);
				\draw[red][in=120,out=90,looseness=1] (-.3,.5) to (0,.8);
				\node[scale=.8] at (0,.3) {$\color{purple}{\bullet}$};
				\node[scale=.8] at (0,.8) {$\color{purple}{\bullet}$};
				\node at (.4,.6) {$=$};
				\draw[purple] (.8,0) to (.8,1.2);
				\node at (1.2,.6) {$=$};
				\draw[purple] (1.6,0) to (1.6,1.2);
				\draw[magenta][in=-60,out=-90,looseness=1] (1.9,.5) to (1.6,.3);
				\draw[magenta][in=60,out=90,looseness=1] (1.9,.5) to (1.6,.8);
				\node[scale=.8] at (1.6,.3) {$\color{purple}{\bullet}$};
				\node[scale=.8] at (1.6,.8) {$\color{purple}{\bullet}$};
		\end{tikzpicture}} \hspace*{2mm} (separability)
	\end{itemize}
\end{defn}

\begin{defn}
	Given a C*/W*-2-category $\mcal C$, it's \textit{Q-system completion} is the C*/W*-2-category $\textbf{QSys}(\mcal C)$ whose :
	\begin{itemize}
		\item[(1)] 0-cells are Q-systems $(Q,m,i) \in \mcal C_1(b,b)$.
		\item[(2)] 1-cells between Q-systems $P \in \mcal C_1(a,a)$ and $Q \in \mcal C_1(b,b)$ are $Q-P$ bimodules $(X, \lambda_X, \rho_X)$.
		\item[(3)] 2-cells are bimodule intertwiners. Given Q-systems $_aP_a$, $_bQ_b$ and $Q-P$ bimodules $_bX_a$, $_bY_a$, $\textbf{QSys}(\mcal C)_2(_QX_P,\ _QY_P)$ is the set of $f \in \mcal C_2(X,Y)$ such that:
		\[\raisebox{-4mm}{\begin{tikzpicture}
				\draw[purple] (0,0) to (0,1.6);
				\draw[red][in=120,out=90,looseness=1] (-.5,0) to (0,.3);
				\node[scale=.8] at (0,.3) {$\color{purple}{\bullet}$};
				\node[draw,thick,rounded corners,fill=white] at (0,.9) {$f$};
				\node at (.6,.8) {$=$};
				\draw[purple] (1.5,0) to (1.5,1.6);
				\draw[red][in=120,out=90,looseness=1] (1,.8) to (1.5,1.1);
				\node[scale=.8] at (1.5,1.1) {$\color{purple}{\bullet}$};
				\node[draw,thick,rounded corners,fill=white] at (1.5,.6) {$f$};
				\draw[red] (1,.8) to (1,0);
		\end{tikzpicture}} \ \ \ \ \t{and}\ \ \ \
		\raisebox{-4mm}{\begin{tikzpicture}
				\draw[purple] (0,0) to (0,1.6);
				\draw[magenta][in=60,out=90,looseness=1] (.5,0) to (0,.3);
				\node[scale=.8] at (0,.3) {$\color{purple}{\bullet}$};
				\node[draw,thick,rounded corners,fill=white] at (0,.9) {$f$};
				\node at (.6,.8) {$=$};
				\draw[purple] (1.2,0) to (1.2,1.6);
				\draw[magenta][in=60,out=90,looseness=1] (1.7,.8) to (1.2,1.1);
				\node[scale=.8] at (1.2,1.1) {$\color{purple}{\bullet}$};
				\node[draw,thick,rounded corners,fill=white] at (1.2,.6) {$f$};
				\draw[magenta] (1.7,.8) to (1.7,0);
		\end{tikzpicture}} \]
	\end{itemize}
	We refer the reader to \cite{CPJP} for full details that $\textbf{QSys}(\mcal C)$ is a *-2-category.
\end{defn}

Let us recall the description of canonical inclusion functor *-2-functor $\iota_{\mcal C} : \mcal C \hookrightarrow \textbf{QSys}\left(\mcal C\right)$ already given in \cite[Construction 3.24]{CPJP}.
\begin{itemize}
	\item[(I1)] For $c \in \mcal C_0$, we map $c$ to $1_c \in \mcal C_1(c,c)$ with its obvious Q-system structure as the tensor unit of $\mcal C_1(c,c)$.
	
	\item[(I2)] For a 1-cell  $_a X_b \in \mcal C_1(a,b)$, $X$ itself is a unital Frobenius $1_a$-$1_b$ bimodule object, so we map $X$ to itself.
	
	\item[(I3)] For a 2-cell $f \in \mcal C_2(X,Y)$, we see that $f$ is $1_a - 1_b$ bimodular, so we map $f$ to itself. 
\end{itemize}
For further details about the canonical inclusion functor *-2-functor $\iota_{\mcal C} : \mcal C \hookrightarrow \textbf{QSys}\left(\mcal C\right)$ we refer the reader to \cite{CPJP}. 
\begin{defn}
	A 2-category $\mcal C$ is said to be Q-system complete if the canonical inclusion functor *-2-functor $\iota_{\mcal C} : \mcal C \hookrightarrow \textbf{QSys}\left(\mcal C\right)$ is a *-2-equivalence.
\end{defn}
In \cite{CPJP}, Q-system completion of a C*-2-category $\mcal C$  has been characterised in terms of Q-systems which `splits'.

\begin{thm}\cite[Theorem 3.36]{CPJP}.
	A C*/W*-2-category is said to be Q-system complete if and only if every Q-system $Q \in \mcal C_1(b,b)$ splits, that is, there is an object $c \in \mcal C_0$ and a dualizable 1-cell $X \in \mcal C_1(c,b)$ which admits a unitary separable dual $\left(\ol{X}, ev_X, coev_X \right)$ such that $\left(Q,m,i\right)$ is isomorphic to $_b X \us{c}\boxtimes \ol{X}_b$ as Q-systems.
\end{thm}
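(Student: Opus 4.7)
The plan is to prove both directions of the biconditional by exploiting the already-noted fact that the canonical inclusion $\iota_{\mcal C} : \mcal C \hookrightarrow \textbf{QSys}(\mcal C)$ is always a *-equivalence on every hom category. Consequently, $\iota_{\mcal C}$ is a *-2-equivalence if and only if it is essentially surjective on 0-cells up to *-equivalence in $\textbf{QSys}(\mcal C)$, i.e., if and only if every Q-system $Q \in \mcal C_1(b,b)$ is *-equivalent in $\textbf{QSys}(\mcal C)$ to some $\iota_{\mcal C}(c) = 1_c$. The theorem then amounts to the assertion that such a *-equivalence is precisely the datum of a splitting of $Q$.

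For the forward direction, I would unpack a *-equivalence $Q \simeq 1_c$ in $\textbf{QSys}(\mcal C)$ as a $Q$-$1_c$ bimodule $X \in \mcal C_1(c,b)$, a $1_c$-$Q$ bimodule $Y \in \mcal C_1(b,c)$, and unitary bimodule isomorphisms $\phi : X \boxtimes Y \xrightarrow{\sim} Q$ and $u : Y \boxtimes_Q X \xrightarrow{\sim} 1_c$ (noting that tensoring over the unit $1_c$ collapses to $\boxtimes$). I would set $\ol X := Y$, $coev_X := \phi^{-1} \circ i_Q$, and $ev_X := u \circ \iota^*$, where $\iota : Y \boxtimes_Q X \hookrightarrow Y \boxtimes X$ is the inclusion of the $Q$-balanced subobject as an orthogonal summand (available because $\mcal C$ is locally orthogonal projection complete). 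Then unitarity of $u$ forces $ev_X \circ ev_X^* = u \iota^* \iota u^* = \text{id}_{1_c}$, giving unitary separability, while the triangle identities follow from the $Q$-bimodule axioms of $X$ and $Y$ combined with (Q1)--(Q3) on $Q$.

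For the converse, starting from splitting data $(X,\ol X, ev_X, coev_X)$ together with an isomorphism $\phi : X \boxtimes \ol X \xrightarrow{\sim} Q$ of Q-systems, I would promote $X$ to a $Q$-$1_c$ bimodule with left action
\[ Q \boxtimes X \xrightarrow{\phi^{-1} \boxtimes \text{id}_X} X \boxtimes \ol X \boxtimes X \xrightarrow{\text{id}_X \boxtimes\, ev_X} X, \]
and $\ol X$ to a $1_c$-$Q$ bimodule symmetrically. Checking axioms (B1)--(B4) is a graphical-calculus exercise that uses the Frobenius and separability of $Q$ (transported across $\phi$) together with the duality equations for $(ev_X, coev_X)$. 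The identification $X \boxtimes_{1_c} \ol X = X \boxtimes \ol X \cong Q$ is then tautologically a unitary isomorphism of $Q$-$Q$ bimodules, while $\ol X \boxtimes_Q X \cong 1_c$ follows because the balancing projection defining $\boxtimes_Q$ is, under the splitting, exactly $(\text{id}_{\ol X} \boxtimes\, coev_X \boxtimes \text{id}_X) \circ (\text{id}_{\ol X} \boxtimes\, ev_X \boxtimes \text{id}_X)$-type data, whose compression to $1_c$ is a unitary by the unitary separability $ev_X \circ ev_X^* = \text{id}_{1_c}$.

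The main obstacle in both directions is the careful bookkeeping relating the balancing projection defining $\boxtimes_Q$ to the Q-system structure on $Q$: one must verify that the orthogonal projection realizing $Y \boxtimes_Q X$ as a summand of $Y \boxtimes X$ corresponds, under the splitting $Q \cong X \boxtimes \ol X$, to an expression built from $ev_X$, $ev_X^*$, and the Q-system multiplication, so that unitarity of the bimodule isomorphism $u$ is equivalent to the separability identity $ev_X \circ ev_X^* = \text{id}_{1_c}$. Once this dictionary is pinned down, all remaining verifications reduce to the diagrammatic axioms (Q1)--(Q4) and the snake equations for $(ev_X, coev_X)$.
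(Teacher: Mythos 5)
The paper does not actually prove this statement: it is quoted verbatim from \cite[Theorem 3.36]{CPJP}, so there is no internal proof to compare against. Your outline --- reduce, via the fact that $\iota_{\mcal C}$ is always an equivalence on hom categories, to essential surjectivity on $0$-cells, and then identify a unitary equivalence $Q \simeq 1_c$ in $\textbf{QSys}(\mcal C)$ with a splitting of $Q$, using the balancing isometry $\iota$ to define $ev_X = u \circ \iota^*$ so that unitary separability becomes $u\iota^*\iota u^* = \t{id}_{1_c}$ --- is precisely the route taken in the cited proof, and is correct. The only point you leave implicit that deserves a sentence in a full write-up is that the bimodule isomorphism $\phi : X \boxtimes \ol X \to Q$ also intertwines the multiplications (the unit is handled by your choice $coev_X = \phi^{-1}\circ i$), and that in the C*-setting the equivalence data $(\phi,u)$ can indeed be chosen unitary.
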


\subsection{C*-2-category of $ 2 $-functors}\

Suppose $\mcal C, \mcal D$ are C*-2-categories.
\begin{defn}\cite{CP}
	A $ * $-$ 2 $-functor $F = \left(F, F^1,F^2\right) : \mcal C \to \mcal D$ consist of unitary $ 2 $-cells $$F_{X,Y}^2 \in \mcal D_2 \left(F(X) \boxtimes F(Y), F(X \boxtimes Y)\right) \ \t{and} \ F_a^1 \in \mcal D_2 \left(1_{F(a)}, F(1_a)\right)$$ satisfying the following:
	\[\raisebox{-18mm}{\begin{tikzpicture}
			\draw (-.1,.2) to (-.1,1.8);
			\draw (0,.2) to (0,1.8);
			\draw (.1,.2) to (.1,1.8);
			\draw (-.4,-.2) to (-.4,-.8);
			\draw (-.3,-.2) to (-.3,-.8);
			\draw (.5,-.2) to (.5,-2);
			\draw (-.8,-1.4) to (-.8,-2);
			\draw (-.1,-1.4) to (-.1,-2);
			\node[scale=.8] at (.1,2) {$F(X \boxtimes (Y \boxtimes Z))$};
			\node[scale=.8] at (-1,-2.3) {$F(X)$};
			\node[scale=.8] at (-.1,-2.3) {$F(Y)$};
			\node[scale=.8] at (.8,-2.3) {$F(Z)$};
			\node[draw,thick,rounded corners,fill=white,minimum width=20] at (0,0) {$F_{XY,Z}^2$};
			\node[draw,thick,rounded corners,fill=white] at (-.4,-1.2) {$F_{X,Y}^2$};
			\node[draw,thick,rounded corners,fill=white] at (0,1.2) {$F(\alpha^{\mcal C})$};
	\end{tikzpicture}} = \raisebox{-18mm}{\begin{tikzpicture}
			\draw (-.1,.2) to (-.1,.8);
			\draw (0,.2) to (0,.8);
			\draw (.1,.2) to (.1,.8);
			\draw (-.6,-.2) to (-.6,-3);
			\draw (.4,-.2) to (.4,-.8);
			\draw (.5,-.2) to (.5,-3);
			\draw (-.1,-1.4) to (-.1,-3);
			\node[scale=.8] at (.1,1) {$F(X \boxtimes (Y \boxtimes Z))$};
			\node[scale=.8] at (-1,-3.3) {$F(X)$};
			\node[scale=.8] at (-.1,-3.3) {$F(Y)$};
			\node[scale=.8] at (.8,-3.3) {$F(Z)$};
			\node[draw,thick,rounded corners,fill=white,minimum width=20] at (0,0) {$F_{X,YZ}^2$};
			\node[draw,thick,rounded corners,fill=white] at (.3,-1.2) {$F_{Y,Z}^2$};
			\node[draw,thick,rounded corners,fill=white,minimum width=45] at (0,-2.4) {$\alpha^{\mcal D}$};
	\end{tikzpicture}} \ \ \ \ \ \ \ \ \raisebox{-12mm}{\begin{tikzpicture}
	\draw (0,.3) to (0,2);
	\draw (-.4,-.3) to (-.4,-2);
	\draw (.3,-.3) to (.3,-1);
	\draw[densely dotted] (.3,-1.5) to (.3,-2);
	\draw[densely dotted] (.1,.3) to (.1,1.5);
	\node[scale=.8] at (-.9,-1) {$F(X)$};
	\node[scale=.8] at (.8,-.7) {$F(1_a)$};
	\node[scale=.8] at (.8,-1.8) {$1_F(a)$}; 
	\node[draw,thick,rounded corners,fill=white] at (0,0) {$F_{X,1_a}^2$};
	\node[draw,thick,rounded corners,fill=white] at (.3,-1.2) {$F_a^1$};
	\node[draw,thick,rounded corners,fill=white] at (0,1.2) {$F(\rho^a_X)$};
\end{tikzpicture}} = \raisebox{-6mm}{\begin{tikzpicture}
\draw (0,1.5) to (0,-1.5);
\draw[densely dotted] (.4,0) to (.4,-1.5);
\node[draw,thick,rounded corners,fill=white] at (0,0) {$\rho^{F(a)}_{F(X)}$};
\end{tikzpicture}} \ \ \ \ \ \ \ \ \raisebox{-12mm}{\begin{tikzpicture}
\draw (0,.3) to (0,2);
\draw (-.4,-.3) to (-.4,-1.2);
\draw (.3,-.3) to (.3,-2);
\draw[densely dotted] (-.4,-1.5) to (-.4,-2);
\draw[densely dotted] (-.1,.3) to (-.1,1);
\node[scale=.8] at (.8,-1) {$F(X)$};
\node[scale=.8] at (-.9,-.6) {$F(1_b)$};
\node[scale=.8] at (-.9,-1.8) {$1_{F(b)}$}; 
\node[draw,thick,rounded corners,fill=white] at (0,0) {$F_{1_b,X}^2$};
\node[draw,thick,rounded corners,fill=white] at (-.4,-1.2) {$F_b^1$};
\node[draw,thick,rounded corners,fill=white] at (0,1.2) {$F(\lambda^b_X)$};
\end{tikzpicture}} = \raisebox{-6mm}{\begin{tikzpicture}
\draw (0,1.5) to (0,-1.5);
\draw[densely dotted] (-.4,0) to (-.4,-1.5);
\node[draw,thick,rounded corners,fill=white] at (0,0) {$\lambda^{F(b)}_{F(X)}$};
\end{tikzpicture}}  \] for $ 1 $-cells $X \in \mcal C_1(a,b)$, $Y \in \mcal C_1(c,a)$, and $Z \in \mcal C_1(d,c)$
\end{defn}

\begin{defn}
	Suppose $F, G : \mcal C \to \mcal D$ are $ * $-$ 2 $-functors. A $ * $-$ 2 $-transformation $ \phi : F \Rightarrow G$ consists of :
	\begin{itemize}
		\item [(1)] for each $a \in \mcal C_0$, a $ 1 $-cell $\phi_a \in \mcal D_1(F(a), G(a))$, and
		\item [(2)] for each $ 1 $-cell $X \in \mcal C_1(a,b)$, a unitary $ 2 $-cell $\phi_X \in \mcal D_2(\phi_b \boxtimes F(X), G(X) \boxtimes \phi_a)$. We will denote $\phi_X$ by $\raisebox{-8mm}{\begin{tikzpicture}
				\draw[in=-90,out=90,looseness=2] (0,0) to (.5,1);
				\draw[in=-90,out=90,looseness=2] (.5,0) to (0,1);
				\draw[fill=white] (.25,.5) circle(2pt);
				\node at (.6,1.2) {$\phi_a$};
				\node[scale=.7] at (-.2,1.2) {$G(X)$};
				\node[scale=.7] at (.7,-.2) {$F(X)$};
				\node at (-.1,-.2) {$\phi_b$};
		\end{tikzpicture}} $.
	\end{itemize}
This data satisfies the following coherence properties:
$$\raisebox{-16mm}{\begin{tikzpicture}
		\draw[in=-90,out=90,looseness=2] (0,0) to (2,2);
		\draw (2,2) to (2,2.7);
		\draw[in=-90,out=90,looseness=2] (1,0) to (.5,1.7);
		\draw[in=-90,out=90,looseness=2] (1.6,0) to (1.1,1.7);
		\draw[fill=white] (.6,1) circle(2pt);
		\draw[fill=white] (1.2,1) circle(2pt);
		\draw (.8,2.1) to (.8,2.7);
		\draw (.9,2.1) to (.9,2.7);
		\node[draw,thick,rounded corners,fill=white] at (.8,1.8) {$G_{X,Y}^2$};
		\node[scale=.8] at (0,2.5) {$G(X \boxtimes Y)$};
		\node[scale=.8] at (.5,0) {$F(X)$};
		\node[scale=.8] at (2,0) {$F(Y)$};
		\node at (-.3,0) {$\phi_b$};
		\node at (2.3,1.6) {$\phi_c$};	
\end{tikzpicture}} = \raisebox{-16mm}{\begin{tikzpicture}
		\draw[in=-90,out=90,looseness=2] (-.3,.3) to (-.8,1.3);
		\draw[in=-90,out=90,looseness=2] (-.2,.3) to (-.7,1.3);
		\draw[in=-90,out=90,looseness=2] (-.8,.3) to (-.2,1.3);
		\draw (-.8,.3) to (-.8,-1);
		\draw[fill=white] (-.5,.8) circle(2pt);
		\draw (-.3,-.3) to (-.3,-1);
		\draw (.3,-.3) to (.3,-1);
		\node[draw,thick,rounded corners,fill=white] at (0,0) {$F_{X,Y}^2$};
		\node[scale=.8] at (-.3,-1.3) {$F(X)$};
		\node[scale=.8] at (.5,-1.3) {$F(Y)$};
		\node at (-1.1,-.7) {$\phi_b$};
		\node at (.1,1.2) {$\phi_c$}; 
\end{tikzpicture}} \ \ \ \ \ \ \ \ \ \ \ \ \ \ \ \ \raisebox{-12mm}{\begin{tikzpicture}
		\draw[in=-90,out=90,looseness=2] (0,0) to (.5,1);
		\draw[in=-90,out=90,looseness=2] (.5,0) to (0,1);
		\draw[fill=white] (.25,.5) circle(2pt);
		\draw (-.2,1) to (-.2,1.7);
		\draw (.5,1) to (.5,1.7);
		\node[scale=.8,draw,thick,rounded corners,fill=white] at (-.2,1) {$G(f)$};
		\node at (.8,1.2) {$\phi_a$};
		\node[scale=.7] at (-.2,1.9) {$G(Y)$};
		\node[scale=.7] at (.7,-.2) {$F(X)$};
		\node at (-.1,-.2) {$\phi_b$};
\end{tikzpicture}} = \raisebox{-12mm}{\begin{tikzpicture}
		\draw[in=-90,out=90,looseness=2] (0,0) to (.5,1);
		\draw[in=-90,out=90,looseness=2] (.5,0) to (0,1);
		\draw[fill=white] (.25,.5) circle(2pt);
		\draw (.7,-.5) to (.7,-1);
		\draw (0,-1.1) to (0,0);
		\node[scale=.8,draw,thick,rounded corners,fill=white] at (.7,-.3) {$F(f)$};
		\node at (.6,1.2) {$\phi_a$};
		\node[scale=.7] at (-.2,1.2) {$G(Y)$};
		\node[scale=.7] at (.7,-1.2) {$F(X)$};
		\node at (-.1,-1.2) {$\phi_b$};
\end{tikzpicture}} 
\ \ \ \ \ \ \ \ \ \ \ \ \ \ \ \ \ \raisebox{-12mm}{\begin{tikzpicture}
		\draw[in=-90,out=90,looseness=2] (0,0) to (.5,1);
		\draw[densely dotted,in=-90,out=90,looseness=2] (.5,0) to (0,1);
		\draw[fill=white] (.25,.5) circle(2pt);
		\draw (-.2,1) to (-.2,1.7);
		\draw (.5,1) to (.5,1.7);
		\node[scale=.8,draw,thick,rounded corners,fill=white] at (-.1,1) {$G_a^1$};
		\node at (.8,1.2) {$\phi_a$};
		\node[scale=.7] at (-.2,1.9) {$G(1_a)$};
		\node at (-.1,-.2) {$\phi_a$};
\end{tikzpicture}} = \raisebox{-12mm}{\begin{tikzpicture}
		\draw[in=-90,out=90,looseness=2] (0,0) to (.5,1);
		\draw[in=-90,out=90,looseness=2] (.5,0) to (0,1);
		\draw[fill=white] (.25,.5) circle(2pt);
		\draw[densely dotted] (.7,-.5) to (.7,-1);
		\draw (0,-1.1) to (0,0);
		\node[scale=.8,draw,thick,rounded corners,fill=white] at (.7,-.3) {$F_a^1$};
		\node at (.6,1.2) {$\phi_a$};
		\node[scale=.7] at (-.2,1.2) {$G(1_a)$};
		\node at (-.1,-1.2) {$\phi_a$};
\end{tikzpicture}}$$  for $ 1 $-cells $X \in \mcal C_1(a,b)$ and $Y \in \mcal C_1(c,a)$, for every $ 2 $-cell $f \in \mcal C_2 (_bX_a, \,  _bY_a)$
\end{defn}
\begin{defn}\cite{CP}
	Suppose $F,G : \mcal C \to \mcal D$ are $ * $-$ 2 $-functors and $\phi, \psi : F \Rightarrow G$ are $ * $-$ 2 $-transformations. A $ * $-$ 2 $-modification $\eta : \phi \Rrightarrow \psi$ consists of a $ 2 $-cell $\eta_a \in \mcal D_1(\phi_a, \psi_a)$ for each $a \in \mcal C_0$ such that, $\us{a \in \mcal C_0}{\t{sup}}\norm{\eta_a} < \infty$ and 
	\begin{equation}\label{modificationeqns}
		\raisebox{-11mm}{\begin{tikzpicture}
				\draw[in=-90,out=90,looseness=2] (0,0) to (.5,1);
				\draw[in=-90,out=90,looseness=2] (.5,0) to (0,1);
				\draw[fill=white] (.25,.5) circle(2pt);
				\draw (-.1,-.3) to (-.1,-1);
				\draw (.5,0) to (.5,-1);
				\node[draw,thick,rounded corners,fill=white] at (-.1,-.2) {$\eta_b$};
				\node at (.6,1.2) {$\psi_a$};
				\node[scale=.7] at (-.2,1.2) {$G(X)$};
				\node[scale=.7] at (.7,-1.2) {$F(X)$};
				\node at (-.2,.3) {$\psi_b$};
				\node at (-.2,-1.1) {$\phi_b$};
		\end{tikzpicture}} = \raisebox{-11mm}{\begin{tikzpicture}
				\draw[in=-90,out=90,looseness=2] (0,0) to (.5,1);
				\draw[in=-90,out=90,looseness=2] (.5,0) to (0,1);
				\draw[fill=white] (.25,.5) circle(2pt);
				\draw (.6,1.4) to (.6,2);
				\draw (0,1) to (0,2);
				\node[draw,thick,rounded corners,fill=white] at (.6,1.2) {$\eta_a$};
				\node at (.7,.65) {$\phi_a$};
				\node at (.9,1.7) {$\psi_a$};
				\node[scale=.7] at (-.4,1.2) {$G(X)$};
				\node[scale=.7] at (.7,-.2) {$F(X)$};
				\node at (-.1,-.2) {$\phi_b$};
		\end{tikzpicture}}  \ \ \ \ \t{for each} \ \ X \in \mcal C_1(a,b)
	\end{equation}
	
\end{defn}
We now move on to describe the tensor structure in $\textbf{Fun}(\mcal C,\mcal D)$. We will denote the tensor in the $ 2 $-categories $\mcal C, \mcal D$ by $\boxtimes$, while that in $\textbf{Fun}(\mcal C, \mcal D)$ will be denoted by $\otimes$.
\begin{defn}(Horizontal composition in $\textbf{Fun}(\mcal C, \mcal D)$).
	Suppose $F,G,H : \mcal C \to \mcal D$ be $ * $-$ 2 $-functors and $\phi : G \Rightarrow F$, $\psi: H \Rightarrow G$ be $ * $-$ 2 $-transformations. The tensor product $\phi \otimes \psi : H \Rightarrow F$ is defined as follows. For each $a \in \mcal C_0$, $\left( \phi \otimes \psi \right)_a \coloneqq \phi_a \us{G(a)}\boxtimes \psi_a$. For each $ 1 $-cell $X \in \mcal C_1(a,b)$, $\left(\phi \otimes \psi \right)_X \coloneqq \raisebox{-8mm}{\begin{tikzpicture}
			\draw[in=-90,out=90,looseness=2] (.6,0) to (1.1,1);
			\draw[in=-90,out=90,looseness=2] (0,0) to (.5,1);
			\draw[in=-90,out=90,looseness=2] (1.1,0) to (0,1);
			\draw[fill=white] (.8,.5) circle(2pt);
			\draw[fill=white] (.3,.5) circle(2pt);
			\node at (-.2,-.1) {$\phi_b$};
			\node at (.9,-.2) {$\psi_b$};
			\node[scale=.8] at (1.6,0) {$H(X)$};
			\node[scale=.8] at (-.2,1.2) {$F(X)$};
			\node at (1.2,1.25) {$\psi_a$};
			\node at (.6,1.3) {$\phi_a$};
	\end{tikzpicture}} $.\\
Suppose $\phi, \phi' : G \Rightarrow F $ and $\psi, \psi' : H \Rightarrow G$ are $ * $-$ 2 $-transformations and $n : \phi \Rrightarrow \phi'$ and $t : \psi \Rrightarrow \psi'$ are $ * $-$ 2 $-modifications. Their tensor product $n \otimes t : \phi \otimes \psi \Rrightarrow \phi' \otimes \psi'$ is defined as,
\[\left(n \otimes t\right)_a \coloneqq n_a \boxtimes t_a \ \ \t{for each} \ \ a \in \mcal C_0\] 
\end{defn}
We denote vertical composition of $ 2 $-cells in the $ 2 $-categories $\mcal C, \mcal D$ by $\cdot$, while that in $\textbf{Fun}(\mcal C, \mcal D)$ will be denoted by $\circ$.
\begin{defn}(Vertical composition in $\textbf{Fun}(\mcal C, \mcal D)$).
	Suppose $F, G : \mcal C \to \mcal D$ are $ * $-$ 2 $-functors and $\phi, \phi', \phi'' : F \Rightarrow G$ are $ * $-$ 2 $-transformations and $n : \phi \Rrightarrow \phi'$, $n' : \phi' \Rrightarrow \phi''$ are $ * $-$ 2 $-modifications. The vertical composition $n' \circ n : \phi \Rrightarrow \phi'$ is defined by, $\left( n' \circ n \right)_a \coloneqq n'_a \cdot n_a$ for each $a \in \mcal C_0$.
\end{defn}
We refer the reader to \cite{CP}for full details that $\textbf{Fun}(\mcal C, \mcal D)$ is a $ 2 $-category. It has already been proved in \cite{CP} that $\textbf{Fun}(\mcal C, \mcal D)$ is C*/W*-2-category whenever $\mcal C, \mcal D$ are C*/W*-2-categories. We prove below that $\textbf{Fun}(\mcal C, \mcal D)$ is locally orthogonal projection complete whenever $\mcal D$ is locally orthogonal projection complete

\begin{prop}
	Suppose $\mcal D$ is a C*-2-category which is locally orthogonal projection complete. Then $\normalfont\textbf{Fun}(\mcal C, \mcal D)$ is locally orthogonal projection complete
\end{prop}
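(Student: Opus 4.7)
Let $F, G : \mcal C \to \mcal D$ be $*$-$2$-functors, let $\phi : F \Rightarrow G$ be a $*$-$2$-transformation, and let $p : \phi \Rrightarrow \phi$ be a projection, i.e.\ a $*$-$2$-modification with $p = p^* = p \circ p$. The plan is to split $p$ pointwise in $\mcal D$ and then glue the pieces into a $*$-$2$-transformation $\psi : F \Rightarrow G$ together with an isometric $2$-modification $v : \psi \Rrightarrow \phi$ satisfying $v v^* = p$.

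First, for each $a \in \mcal C_0$, the $2$-cell $p_a \in \mcal D_2(\phi_a, \phi_a)$ is a projection. Since $\mcal D$ is locally orthogonal projection complete, choose a $1$-cell $\psi_a \in \mcal D_1(Fa, Ga)$ and an isometry $v_a \in \mcal D_2(\psi_a, \phi_a)$ with $v_a v_a^* = p_a$ and $v_a^* v_a = \t{id}_{\psi_a}$. For each $1$-cell $X \in \mcal C_1(a,b)$, define
\[
\psi_X \;\coloneqq\; (1_{G(X)} \boxtimes v_a^*) \cdot \phi_X \cdot (v_b \boxtimes 1_{F(X)}) \;\in\; \mcal D_2(\psi_b \boxtimes F(X), G(X) \boxtimes \psi_a).
\]
Unitarity of $\psi_X$ is the first thing to check, and this is where the $2$-modification hypothesis on $p$ is used: the modification equation for $p : \phi \Rrightarrow \phi$ reads $\phi_X \cdot (p_b \boxtimes 1) = (1 \boxtimes p_a) \cdot \phi_X$, so
\[
\psi_X \psi_X^* = (1 \boxtimes v_a^*) \cdot \phi_X \cdot (p_b \boxtimes 1) \cdot \phi_X^* \cdot (1 \boxtimes v_a) = (1 \boxtimes v_a^* p_a v_a) = \t{id},
\]
using unitarity of $\phi_X$ and $v_a^* p_a v_a = v_a^*v_a v_a^*v_a = \t{id}_{\psi_a}$; the computation for $\psi_X^* \psi_X$ is analogous.

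Next I would verify that $\psi = (\psi_a, \psi_X)$ satisfies the three coherence axioms of a $*$-$2$-transformation (compatibility with $F^2_{X,Y}, G^2_{X,Y}$; naturality in $2$-cells $f \in \mcal C_2(X,Y)$; compatibility with the units $F^1_a, G^1_a$). In each case one substitutes the definition of $\psi_X$, uses the modification equation for $p$ to slide the $p_b = v_b v_b^*$ that appears in the middle past $\phi_{X\boxtimes Y}$ (respectively past $\phi_Y$ and $\phi_a$), and then invokes the corresponding coherence axiom for $\phi$. Finally, the family $v = (v_a)_{a \in \mcal C_0}$ defines a $2$-modification $v : \psi \Rrightarrow \phi$: the modification equation $\phi_X \cdot (v_b \boxtimes 1) = (1 \boxtimes v_a) \cdot \psi_X$ reduces, after multiplying the right-hand side out, to $\phi_X \cdot (p_b v_b \boxtimes 1) = \phi_X \cdot (v_b \boxtimes 1)$, which holds because $p_b v_b = v_b v_b^* v_b = v_b$. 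The uniform-norm requirement on a modification is automatic since $\|v_a\| = 1$ for every $a$. Then $v v^* = p$ and $v^* v = \t{id}_\psi$ by construction, exhibiting the required splitting.

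The only genuine obstacle is checking unitarity of $\psi_X$ and the three $*$-$2$-transformation coherence axioms; both reduce immediately to the modification equation for $p$, so no further work beyond careful bookkeeping is needed.
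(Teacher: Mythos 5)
Your proposal is correct and follows essentially the same route as the paper: split each $p_a$ locally in $\mcal D$ to get isometries $v_a$ (the paper's $i_a$), transport $\phi_X$ along them to define the new transformation component (the paper's $x_X$), check unitarity via the modification equation for $p$, and verify that the family $(v_a)$ is itself a $2$-modification splitting $p$. The level of detail on the coherence axioms matches the paper, which likewise leaves that verification as routine.
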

\begin{proof}
	Suppose $F$ and $G$ be two $ * $-$ 2 $-functors  in $\textbf{Fun}(\mcal C, \mcal D)$, $\phi : F \Rightarrow G$ be a $ * $-$ 2 $-transformation, and $p : \phi \Rrightarrow \phi$ be a $ * $-$ 2 $-modification which is also a projection, that is,
	\[ \forall a \in \mcal C_0, \ \ p_a^2 = p_a^* = p_a \in \mcal D_2(\phi_a,\phi_a) \ \ \t{and} \ \ \raisebox{-11mm}{\begin{tikzpicture}
			\draw[in=-90,out=90,looseness=2] (0,0) to (.5,1);
			\draw[in=-90,out=90,looseness=2] (.5,0) to (0,1);
			\draw[fill=white] (.25,.5) circle(2pt);
			\draw (-.1,-.3) to (-.1,-1);
			\draw (.5,0) to (.5,-1);
			\node[draw,thick,rounded corners,fill=white] at (-.1,-.2) {$p_b$};
			\node at (.6,1.2) {$\phi_a$};
			\node[scale=.7] at (-.2,1.2) {$G(X)$};
			\node[scale=.7] at (.7,-1.2) {$F(X)$};
			\node at (-.2,.3) {$\phi_b$};
			\node at (-.2,-1.1) {$\phi_b$};
	\end{tikzpicture}} = \raisebox{-11mm}{\begin{tikzpicture}
	\draw[in=-90,out=90,looseness=2] (0,0) to (.5,1);
	\draw[in=-90,out=90,looseness=2] (.5,0) to (0,1);
	\draw[fill=white] (.25,.5) circle(2pt);
	\draw (.6,1.4) to (.6,2);
	\draw (0,1) to (0,2);
	\node[draw,thick,rounded corners,fill=white] at (.6,1.2) {$p_a$};
	\node at (.7,.65) {$\phi_a$};
	\node at (.9,1.7) {$\phi_a$};
	\node[scale=.7] at (-.4,1.2) {$G(X)$};
	\node[scale=.7] at (.7,-.2) {$F(X)$};
	\node at (-.1,-.2) {$\phi_b$};
\end{tikzpicture}}  \ \ \forall \ X \in \mcal C_1(a,b) \]
	
	So, $p_a \in \mcal D_2(\phi_a, \phi_a)$ is a projection. Since, $\mcal D$ is locally orthogonal projection complete $\exists \ x_a \in \mcal D_1(F(a), G(a))$ and  $i_a \in \mcal D_2(x_a,\phi_a)$ such that,  
	\begin{equation}\label{idempotent}
		\ \ i_a^* i_a = 1_{x_a} \ \ \t{and} \ \ i_a i_a^* = p^{(a)} \ \ \forall a \in \mcal C_0 .  
	\end{equation}
	For a $ 1 $-cell, $X \in \mcal C_1(a,b)$, let $x_X \coloneqq 
	\raisebox{-6mm}{
		\begin{tikzpicture}
			\draw[in=-90,out=90] (0,1) to (0.5,2);
			\draw[white,line width=1mm,in=-90,out=90] (0.5,1) to (0,2);
			\draw[in=-90,out=90] (0.5,1) to (0,2);
			\draw[fill=white] (.25,1.5) circle(2pt);
			\node[scale=.8] at (1,1) {$F(X)$};
			\node at (-.3,1) {$x_b$};
			\node[scale=.8] at (-.5,1.9) {$G(X)$};
			\node at (.8,1.9) {$x_a$};
	\end{tikzpicture}} = 
	\raisebox{-14mm}{
		\begin{tikzpicture}
			\draw[in=-90,out=90] (0,.2) to (0,1.2);
			\draw[in=-90,out=90] (0,1.2) to (0.6,2);
			\draw (.6,2) to (.6,3);
			\draw[in=-90,out=90] (.6,.2) to (0.6,1.2);
			\draw[white,line width=1mm,in=-90,out=90] (0.6,1.2) to (0,2);
			\draw[in=-90,out=90] (0.6,1.2) to (0,2);
			\draw (0,2) to (0,3);
			\draw[fill=white] (.3,1.6) circle(2pt);
			\node[draw,thick,rounded corners, fill=white] at (0,0.8) {$i_b$};
			\node[draw,thick,rounded corners, fill=white] at (.6,2.3) {$i_a^*$};
			\node[scale=.8,right] at (.56,0.2) {$ F(X) $};
			\node[right] at (.5,1.65) {$\phi_a$};
			\node[left] at (0.1,1.4) {$\phi_b$}; 
			\node[left] at (0.1,.2) {$x_b$}; 
			\node[scale=.8,left] at (0,2.9) {$G(X)$};
			\node[right] at (.6,2.9) {$x_a$};
	\end{tikzpicture}}
	$
	
	$x_X x_X^* = \raisebox{-20mm}{
		\begin{tikzpicture}
			\draw[in=-90,out=90] (0,.2) to (0,1.2);
			\draw[in=-90,out=90] (0,1.2) to (0.6,2);
			\draw (.6,2) to (.6,3);
			\draw (0,3.5) to (0,4.4);
			\draw[in=-90,out=90] (.6,3) to (0,3.5);
			\draw[in=-90,out=90] (.6,.2) to (0.6,1.2);
			\draw[white,line width=1mm,in=-90,out=90] (0.6,1.2) to (0,2);
			\draw[in=-90,out=90] (0.6,1.2) to (0,2);
			\draw[fill=white] (.3,1.6) circle(2pt);
			\draw[white,line width=1mm,in=-90,out=90] (0,3) to (.6,3.5);
			\draw[in=-90,out=90] (0,3) to (.6,3.5);
			\draw[fill=white] (.3,3.25) circle(2pt);
			\draw (.6,4) to (.6,4.4);
			\draw (0,2) to (0,3);
			\node[draw,thick,rounded corners, fill=white] at (0.6,0.8) {$i_a$};
			\node[draw,thick,rounded corners, fill=white] at (0,2.3) {$p_b$};
			\node[draw,thick,rounded corners, fill=white] at (.75,3.7) {$i_a^*$};
			\node[right] at (.56,0.2) {$ x_a $};
			\node[right] at (.56,1.35) {$\phi_a $};
			\node[left] at (0.2,1.8) {$\phi_b$}; 
			\node[scale=.8,left] at (0.1,.2) {$G(X)$}; 
			\node[left] at (0,2.9) {$\phi_b$};
			\node[scale=.8,right] at (.6,2.9) {$F(X)$};
			\node[right] at (.6,4.2) {$x_a$};
			\node[scale=.8,left] at (0.1,4.2) {$G(X)$};
	\end{tikzpicture}} = 
	\raisebox{-10mm}{
		\begin{tikzpicture}
			\draw (0,0) to (0,2.4);
			\draw (.6,0) to (.6,2.4);
			\node[scale=.8] at (-.1,-.2) {$G(X)$};
			\node at (.9,-.2) {$x_a$};
	\end{tikzpicture}}$.	
	The last equality follows from the fact that $p : \phi \Rrightarrow \phi$ is a $ * $-$ 2 $-modification and from \Cref{idempotent}. Similarly, it follows that, $x_X^* x_X = 1_{x_b \boxtimes F(X)}$ 
		
	Indeed, $x : F \Rightarrow G$ is a $ * $-$ 2 $-transformation. Now, the proposition will follow from the following claim\\
	\textit{Claim} : $i : x \Rrightarrow \phi$ is $ * $-$ 2 $-modification.\\
	\textit{Proof of the claim} : Since each $i_a$ is an isometry, so $\us{a \in \mcal C_0}{\t{sup}} \norm{i_a} = 1$. Also, $\raisebox{-11mm}{\begin{tikzpicture}
			\draw[in=-90,out=90,looseness=2] (0,0) to (.5,1);
			\draw[in=-90,out=90,looseness=2] (.5,0) to (0,1);
			\draw[fill=white] (.25,.5) circle(2pt);
			\draw (-.1,-.3) to (-.1,-1);
			\draw (.5,0) to (.5,-1);
			\node[draw,thick,rounded corners,fill=white] at (-.1,-.2) {$i_b$};
			\node at (.6,1.2) {$\phi_a$};
			\node[scale=.7] at (-.2,1.2) {$G(X)$};
			\node[scale=.7] at (.7,-1.2) {$F(X)$};
			\node at (-.2,.3) {$\phi_b$};
			\node at (-.2,-1.1) {$x_b$};
	\end{tikzpicture}} = \raisebox{-11mm}{\begin{tikzpicture}
	\draw[in=-90,out=90,looseness=2] (0,0) to (.5,1);
	\draw[in=-90,out=90,looseness=2] (.5,0) to (0,1);
	\draw[fill=white] (.25,.5) circle(2pt);
	\draw (-.1,-.3) to (-.1,-1);
	\draw (.5,0) to (.5,-1);
	\node[draw,thick,rounded corners,fill=white] at (-.1,-.2) {$p_b i_b$};
	\node at (.6,1.2) {$\phi_a$};
	\node[scale=.7] at (-.2,1.2) {$G(X)$};
	\node[scale=.7] at (.7,-1.2) {$F(X)$};
	\node at (-.2,.3) {$\phi_b$};
	\node at (-.2,-1.1) {$x_b$};
\end{tikzpicture}} =  \raisebox{-14mm}{
\begin{tikzpicture}
\draw[in=-90,out=90] (0,.2) to (0,1.2);
\draw[in=-90,out=90] (0,1.2) to (0.6,2);
\draw (.6,2) to (.6,3);
\draw[in=-90,out=90] (.6,.2) to (0.6,1.2);
\draw[white,line width=1mm,in=-90,out=90] (0.6,1.2) to (0,2);
\draw[in=-90,out=90] (0.6,1.2) to (0,2);
\draw (0,2) to (0,3);
\draw[fill=white] (.3,1.6) circle(2pt);
\node[draw,thick,rounded corners, fill=white] at (0,0.8) {$i_b$};
\node[draw,thick,rounded corners, fill=white] at (.6,2.3) {$p_a$};
\node[scale=.8,right] at (.56,0.2) {$ F(X) $};
\node[right] at (.5,1.65) {$\phi_a$};
\node[left] at (0.1,1.4) {$\phi_b$}; 
\node[left] at (0.1,.2) {$x_b$}; 
\node[scale=.8,left] at (0,2.9) {$G(X)$};
\node[right] at (.6,2.9) {$x_a$};
\end{tikzpicture}} = \raisebox{-11mm}{\begin{tikzpicture}
\draw[in=-90,out=90,looseness=2] (0,0) to (.5,1);
\draw[in=-90,out=90,looseness=2] (.5,0) to (0,1);
\draw[fill=white] (.25,.5) circle(2pt);
\draw (.6,1.4) to (.6,2);
\draw (0,1) to (0,2);
\node[draw,thick,rounded corners,fill=white] at (.6,1.2) {$i_a$};
\node at (.7,.65) {$x_a$};
\node at (.9,1.7) {$\phi_a$};
\node[scale=.7] at (-.4,1.2) {$G(X)$};
\node[scale=.7] at (.7,-.2) {$F(X)$};
\node at (-.1,-.2) {$x_b$};
\end{tikzpicture}} $. The first equality follows from \Cref{idempotent}. Now $p : \phi \Rrightarrow \phi$ being a $ * $-$ 2 $-modification reveals the second equality. The third equality again follows from \Cref{idempotent}. This completes the proof of the claim \\
Thus, $\textbf{Fun}(\mcal C,\mcal D)$ is locally orthogonal projection complete.
\end{proof}

	\section{Construction of $0$-cell}\label{2functor}
	For the rest of the paper, we assume $\mcal C$ and $\mcal D$ are strict C*-2-categories. In this section, we construct a suitable $ 2 $-functor $G : \mcal C \to \mcal D$.
	 
	\vspace*{2mm}
	
	Let $\left(\psi_\bullet, m_\bullet , i_\bullet \right)$ be a Q-system in $\t{End}_{\textbf{Fun}(\mcal C,\mcal D)}(F)$ for some $*$-2-functor $F : \mcal C \to \mcal D$ where, $m : \psi \otimes \psi \Rrightarrow \psi$ is the multiplication map and $i : 1_{F} \Rrightarrow \psi$ is the unit map. This means that, for every $a \in \mcal C_0$, we have a Q-system $\left(\psi_a,m_a,i_a\right)$ in $\t{End}_{\mcal D_1} (Fa)$ and $m_a$ and $i_a$ satisfy \Cref{modificationeqns}. We will use these data to construct our suitable $ * $-$ 2 $-functor $G : \mcal C \to \mcal D$.
	
	We will use the following notations:
	 \[m_a = \raisebox{-6mm}{
	 	\begin{tikzpicture}
	 		\draw[red,in=90,out=90,looseness=2] (-0.5,0.5) to (-1.5,0.5);
	 		\node at (-1,1.1) {${\color{red}\bullet}$};
	 		\draw[red] (-1,1.1) to (-1,1.6);
	 		\node[left] at (-1,1.4) {$\psi_a$};
	 		\node[left] at (-1.6,0.5) {$\psi_a$};
	 		\node[right] at (-.5,.5) {$\psi_a$};
	 \end{tikzpicture}} \ \ \ \ \ \ i_a = \raisebox{-6mm}{
	 	\begin{tikzpicture}
	 		\draw [red] (-0.8,-.6) to (-.8,.6);
	 		\node at (-.8,-.6) {${\color{red}\bullet}$};
	 		\node[left] at (-.8,0) {$\psi_a$};
	 \end{tikzpicture}} \ \ \ \ \ \ m_a^* = \raisebox{-6mm}{
	 	\begin{tikzpicture}
	 		\draw[red,in=-90,out=-90,looseness=2] (-0.5,0.5) to (-1.5,0.5);
	 		\node at (-1,-.1) {${\color{red}\bullet}$};
	 		\draw[red] (-1,-.1) to (-1,-.6);
	 		\node[left] at (-1,-.4) {$\psi_a$};
	 		\node[left] at (-1.6,0.5) {$\psi_a$};
	 		\node[right] at (-.5,.5) {$\psi_a$};
	 \end{tikzpicture}} \ \ \ \ \ \ i_a^* = \raisebox{-6mm}{
	 	\begin{tikzpicture}
	 		\draw [red] (-0.8,-.6) to (-.8,.6);
	 		\node at (-.8,.6) {${\color{red}\bullet}$};
	 		\node[left] at (-.8,0) {$\psi_a$};
	 \end{tikzpicture}} \ \ \ \ \t{for each} \ \ a \in \mcal C_0 \]
	
	\subsection{$G$ on $ 0 $-cells.}\
	
	Since $\mcal D$ is Q-system complete, by \cite[Theorem 3.36]{CPJP}, we have an isomorphism of Q-systems $\psi_a$ and $\ol{x}_a \us{c_a}\boxtimes x_a$, where $c_a \in \mcal D_0,\  x_a \in \mcal D_1(Fa,c_a)$ and $\raisebox{-6mm}{\begin{tikzpicture}
	\draw[in=90,out=90,looseness=2,->] (0,0) to (1,0);
	\draw[in=-90,out=-90,looseness=2,<-] (0,0) to (1,0);
	\node at (-.4,-.1) {$x_a$};
	\node at (1.4,-.1) {$\ol{x}_a$};
	\end{tikzpicture}} \ = \ \t{id}_{1_{c_a}} \ \ $. For $a \in \mcal C_0$, define $G(a) \coloneqq c_a \in \mcal D_0$.

\vspace*{2mm}

We move on to define $G$ on $1$-cells and $2$-cells of $\mcal C$.

\subsection{$G$ on $ 1 $-cells}\

Let $_b X_a \in \mcal C_1(a,b)$ and $\gamma^{(a)} \in \mcal D_2 (\ol{x}_a \us{c_a} \boxtimes x_a , \psi_a)$ be the unitary intertwining the algebra maps, for each $a \in \mcal C_0$. We denote \[\gamma^{(a)} \ = \ \raisebox{-12mm}{
	\begin{tikzpicture}
		\draw[red] (0,.3) to (0,1);
		\draw[->] (-.2,-.3) to (-.2,-1);
		\draw[<-] (.2,-.35) to (.2,-1);
		\node[draw,thick,rounded corners,fill=white] at (0,0) {$\gamma^{(a)}$};
		\node at (-.3,-1.2) {$\ol{x}_a$};
		\node at (.4,-1.2) {$x_a$};
		\node at (.3,.8) {$\psi_a$};
\end{tikzpicture}} \ , \ \psi_X \ = \ \raisebox{-8mm}{\begin{tikzpicture}
		\draw[red,in=-90,out=90,looseness=2] (0,0) to (.5,1);
		\draw[in=-90,out=90,looseness=2] (.5,0) to (0,1);
		\draw[fill=white] (.25,.5) circle(2pt);
		\node at (.6,1.2) {$\psi_a$};
		\node[scale=.7] at (-.2,1.2) {$F(X)$};
		\node[scale=.7] at (.7,-.2) {$F(X)$};
		\node at (-.1,-.2) {$\psi_b$};
\end{tikzpicture}}  \ \t{and} \ \psi_X^* \ = \ \raisebox{-8mm}{\begin{tikzpicture}
\draw[in=-90,out=90,looseness=2] (0,0) to (.5,1);
\draw[red,in=-90,out=90,looseness=2] (.5,0) to (0,1);
\draw[fill=white] (.25,.5) circle(2pt);
\node at (.6,-.2) {$\psi_a$};
\node[scale=.7] at (-.2,-.2) {$F(X)$};
\node[scale=.7] at (.7,1.2) {$F(X)$};
\node at (-.1,1.2) {$\psi_b$};
\end{tikzpicture}}\] 

We have, for each $a \in \mcal C_0$ 
	\begin{equation}\label{gammaisoeqn}
	\raisebox{-8mm}{
	\begin{tikzpicture}
		\draw[->] (-.2,-.3) to (-.2,-1);
		\draw[<-] (.2,-.35) to (.2,-1);
		\draw[->] (.8,-.3) to (.8,-1);
		\draw[<-] (1.2,-.35) to (1.2,-1);
		\draw[red,in=90,out=90,looseness=2] (0,.3) to (1,.3);
		\node at (.5,.88) {$\red{\bullet}$};
		\draw[red] (.5,.88) to (.5,1.4);
		\draw[in=90,out=90,looseness=2,->] (2.7,-1) to (3.2,-1);
		\node at (2,0) {$=$};
		\draw[->] (2.5,-.3) to (2.5,-1);
		\draw[<-] (3.4,-.35) to (3.4,-1);
		\draw[red] (3,.3) to (3,1.2);
		\node[draw,thick,rounded corners,fill=white] at (0,0) {$\gamma^{(a)}$};
		\node[draw,thick,rounded corners,fill=white] at (1,0) {$\gamma^{(a)}$};
		\node[draw,thick,rounded corners,fill=white,minimum width=35] at (3,0) {$\gamma^{(a)}$};
\end{tikzpicture}} \ \t{and} \ 
\raisebox{-10mm}{
	\begin{tikzpicture}
		\draw[red] (0,.3) to (0,1);
		\draw[->] (-.2,-.3) to (-.2,-1);
		\draw[<-] (.2,-.3) to (.2,-1);
		\draw[in=-90,out=-90,looseness=2] (-.2,-1) to (.2,-1);
		\node[draw,thick,rounded corners,fill=white] at (0,0) {$\gamma^{(a)}$};
\end{tikzpicture}} \ = \ 
\raisebox{-6mm}{
	\begin{tikzpicture}
		\draw[red] (0,-.4) to (0,1);
		\node at (0,-.4) {$\red{\bullet}$};
\end{tikzpicture}}
\end{equation}

It is straightforward and an enjoyable exercise in graphical calculus to prove the following lemma.
\begin{lem}\label{Qsysgammaklem}
	For each $a \in \mcal C_0$, we have the following:
	\begin{itemize}
		\item[(i)] $\raisebox{-10mm}{\begin{tikzpicture}
				\draw[red,in=90,out=90,looseness=2] (0,.3) to (-1,.3);
				\draw[red] (-1,.3) to (-1,-.8);
				\draw[red] (-.5,.9) to (-.5,1.5);
				\draw[<-] (.7,-.35) to (.7,1.5);
				\draw[->] (1.1,-.35) to (1.1,1.5);
				\draw[in=-90,out=-90,looseness=2] (.2,-.35) to (.7,-.35);
				\draw[in=-90,out=-90,looseness=2] (-.2,-.35) to (1.1,-.35);
				\node[draw,thick,rounded corners,fill=white] at (0,0) {$\gamma^{(a)}$};
				\node at (-.5,.9) {$\red{\bullet}$};
				\node at (-.5,1.5) {$\red{\bullet}$};
		\end{tikzpicture}} \ = \ \raisebox{-4mm}{\begin{tikzpicture}
				\draw[red] (0,-.3) to (0,-1);
				\draw[<-] (-.3,.35) to (-.3,1);
				\draw[->] (.3,.35) to (.3,1);
				\node[draw,thick,rounded corners,fill=white] at (0,0) {${\gamma^{(a)}}^*$};
		\end{tikzpicture}} \ = \ \raisebox{-10mm}{\begin{tikzpicture}
				\draw[red,in=90,out=90,looseness=2] (0,.3) to (1,.3);
				\draw[red] (1,.3) to (1,-.8);
				\draw[red] (.5,.9) to (.5,1.5);
				\draw[->] (-.7,-.35) to (-.7,1.5);
				\draw[<-] (-1.1,-.35) to (-1.1,1.5);
				\draw[in=-90,out=-90,looseness=2] (.2,-.35) to (-1.1,-.35);
				\draw[in=-90,out=-90,looseness=2] (-.2,-.35) to (-.7,-.35);
				\node[draw,thick,rounded corners,fill=white] at (0,0) {$\gamma^{(a)}$};
				\node at (.5,.9) {$\red{\bullet}$};
				\node at (.5,1.5) {$\red{\bullet}$};
		\end{tikzpicture}}  $  
		
		\item[(ii)] $\raisebox{-6mm}{\begin{tikzpicture}
				\draw[red] (0,.3) to (0,1);
				\draw[->] (-.3,-.35) to (-.3,-1);
				\draw[<-] (.3,-.35) to (.3,-1);
				\node[draw,thick,rounded corners,fill=white] at (0,0) {$\gamma^{(a)}$};
				\node at (0,1) {$\red{\bullet}$};
		\end{tikzpicture}} \ = \ \begin{tikzpicture}
			\draw[in=90,out=90,looseness=2,<-] (0,0) to (1,0); 
		\end{tikzpicture}$
	\end{itemize}
\end{lem}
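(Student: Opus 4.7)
Both parts follow from unitarity of $\gamma^{(a)}$ together with the intertwining identities \eqref{gammaisoeqn}, which encode that $\gamma^{(a)} : P \to \psi_a$ is a unitary isomorphism of Q-systems; here $P \coloneqq \ol{x}_a \us{c_a}\boxtimes x_a$ carries its canonical split Q-system structure with unit $i_P = \t{coev}_{\ol{x}_a}$ and multiplication $m_P = \t{id}_{\ol{x}_a} \boxtimes \t{ev}_{\ol{x}_a} \boxtimes \t{id}_{x_a}$.

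I treat (ii) first since it feeds into (i). Taking adjoints of the unit intertwining $i_a = \gamma^{(a)} \circ i_P$ gives $i_a^* = i_P^* \circ {\gamma^{(a)}}^*$; post-composing with $\gamma^{(a)}$ and invoking unitarity ${\gamma^{(a)}}^* \circ \gamma^{(a)} = \t{id}$ yields
\[ i_a^* \circ \gamma^{(a)} = i_P^* = \t{coev}_{\ol{x}_a}^*, \]
which is precisely the cap displayed on the right of (ii).

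For (i), I parse the LHS diagram as the composition
\[ (i_a^* \boxtimes \t{id}_P) \circ (m_a \boxtimes \t{id}_P) \circ (\t{id}_{\psi_a} \boxtimes \gamma^{(a)} \boxtimes \t{id}_P) \circ (\t{id}_{\psi_a} \boxtimes \t{coev}_P) : \psi_a \to P, \]
where the pair of nested cups at the bottom encodes the Q-system coevaluation $\t{coev}_P \coloneqq m_P^* \circ i_P$; this identification is immediate from the explicit formulas $i_P = \t{coev}_{\ol{x}_a}$ and $m_P^* = \t{id}_{\ol{x}_a} \boxtimes \t{ev}_{\ol{x}_a}^* \boxtimes \t{id}_{x_a}$. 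Substituting $\t{id}_{\psi_a} = \gamma^{(a)} \circ {\gamma^{(a)}}^*$ on the free $\psi_a$ strand and applying the multiplication intertwining $m_a \circ (\gamma^{(a)} \boxtimes \gamma^{(a)}) = \gamma^{(a)} \circ m_P$ rewrites the middle of the diagram as $\gamma^{(a)} \circ m_P \circ ({\gamma^{(a)}}^* \boxtimes \t{id}_P)$; then using (ii) to collapse $i_a^* \circ \gamma^{(a)}$ into $i_P^*$ reduces the full expression to $(\t{ev}_P \boxtimes \t{id}_P) \circ ({\gamma^{(a)}}^* \boxtimes \t{coev}_P)$, where $\t{ev}_P \coloneqq i_P^* \circ m_P$. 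By the snake identity for the self-dual Q-system $P$, this composition collapses to ${\gamma^{(a)}}^*$, i.e.\ the middle picture of (i). The rightmost picture in (i) is the mirror image of the LHS (with $\gamma^{(a)}$ placed to the right of the free $\psi_a$ strand and the coev-loops oriented the other way), and the same chain of substitutions, this time invoking the opposite snake identity $(\t{id}_P \boxtimes \t{ev}_P) \circ (\t{coev}_P \boxtimes \t{id}_P) = \t{id}_P$, again yields ${\gamma^{(a)}}^*$. The whole argument is routine graphical calculus driven by \eqref{gammaisoeqn}, unitarity, and the two snake identities for $P$, and I anticipate no genuine obstacle; the only care point is verifying that the nested cups in the LHS picture genuinely realise $\t{coev}_P = m_P^* \circ i_P$, which is immediate from the formulas above.
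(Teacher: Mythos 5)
The paper gives no proof of this lemma (it is left as an ``enjoyable exercise in graphical calculus''), and your argument is a correct solution along exactly the intended lines: reading the nested cups as $m_P^*\circ i_P$, using unitarity of $\gamma^{(a)}$ together with the two intertwining relations of \eqref{gammaisoeqn}, and closing with the zigzag identity for the split Q-system $P=\ol{x}_a\boxtimes x_a$ (which follows from its Frobenius and unitality axioms). The only cosmetic discrepancy is that your labels $\t{ev}_P=i_P^*\circ m_P$ and $\t{coev}_P=m_P^*\circ i_P$ are swapped relative to the convention of \Cref{Qdual}; this has no bearing on the validity of the computation.
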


\begin{cor}\label{Qsysgammakcor}
	For each $a \in \mcal C_0$, $\raisebox{-6mm}{\begin{tikzpicture}
			\draw[red] (0,0) to (0,.7);
			\draw[->] (-.2,-.3) to (-.2,-1);
			\draw[<-] (.2,-.35) to (.2,-1);
			\draw[red,in=-90,out=-90,looseness=2] (-.4,1.2) to (.4,1.2);
			\node[draw,thick,rounded corners,fill=white] at (0,0) {$\gamma^{(a)}$};
			\node at (0,.75) {$\red{\bullet}$};
	\end{tikzpicture}} \ = \ \raisebox{-6mm}{\begin{tikzpicture}
			\draw[red] (0,0) to (0,1);
			\draw[red] (1.5,0) to (1.5,1);
			\draw[->] (-.2,-.35) to (-.2,-1);
			\draw[<-] (1.7,-.35) to (1.7,-1);
			\draw[in=-90,out=-90,looseness=2,<-] (.2,-.35) to (1.3,-.35);
			\node[draw,thick,rounded corners,fill=white] at (0,0) {$\gamma^{(a)}$};
			\node[draw,thick,rounded corners,fill=white] at (1.5,0) {$\gamma^{(a)}$};
	\end{tikzpicture}} \ \ \ \ $. 
\end{cor}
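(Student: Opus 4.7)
The plan is to read the displayed identity as the fact that the unitary $\gamma^{(a)}$ intertwines the \emph{comultiplications} of the two Q-systems at play, namely $(\psi_a, m_a, i_a)$ and $(\bar x_a \boxtimes x_a,\, \mathrm{id}_{\bar x_a}\boxtimes \mathrm{ev}_{\bar x_a}\boxtimes \mathrm{id}_{x_a},\, \mathrm{coev}_{\bar x_a})$, and to deduce this from the already-known intertwining of multiplications. Parsing the graphics algebraically, the left-hand side is $m_a^{*}\cdot \gamma^{(a)}$---the bullet together with the cup immediately above it forms the comultiplication $m_a^{*}:\psi_a \to \psi_a\boxtimes \psi_a$---whereas the right-hand side is $(\gamma^{(a)}\boxtimes \gamma^{(a)})\cdot (\mathrm{id}_{\bar x_a}\boxtimes \mathrm{ev}_{\bar x_a}^{\,*}\boxtimes \mathrm{id}_{x_a})$, both being $2$-cells of type $\bar x_a \boxtimes x_a \to \psi_a \boxtimes \psi_a$.

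I would then take the $*$-adjoint of the first equation in \Cref{gammaisoeqn}, which records
\[ m_a \cdot (\gamma^{(a)}\boxtimes \gamma^{(a)}) \;=\; \gamma^{(a)}\cdot (\mathrm{id}_{\bar x_a}\boxtimes \mathrm{ev}_{\bar x_a}\boxtimes \mathrm{id}_{x_a}). \]
Adjointing both sides yields
\[ (\gamma^{(a)*}\boxtimes \gamma^{(a)*})\cdot m_a^{*} \;=\; (\mathrm{id}_{\bar x_a}\boxtimes \mathrm{ev}_{\bar x_a}^{\,*}\boxtimes \mathrm{id}_{x_a})\cdot \gamma^{(a)*}. \]
Unitarity of $\gamma^{(a)}$, i.e. $\gamma^{(a)*}\cdot \gamma^{(a)} = \mathrm{id}$ and $\gamma^{(a)}\cdot \gamma^{(a)*} = \mathrm{id}$, lets me post-compose the above with $\gamma^{(a)}$ on the right and with $(\gamma^{(a)}\boxtimes \gamma^{(a)})$ on the left, absorbing both $\gamma^{(a)*}$ factors and producing exactly the identity $m_a^{*}\cdot \gamma^{(a)} = (\gamma^{(a)}\boxtimes \gamma^{(a)})\cdot (\mathrm{id}_{\bar x_a}\boxtimes \mathrm{ev}_{\bar x_a}^{\,*}\boxtimes \mathrm{id}_{x_a})$ claimed by the corollary.

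I do not anticipate a substantial obstacle: the content is the standard observation that a unitary algebra intertwiner of Q-systems is automatically a coalgebra intertwiner. The only care needed is to match the paper's graphical conventions---recognising the bullet-with-cup on the left as $m_a^{*}$ and reading the cup on the right as $\mathrm{ev}_{\bar x_a}^{\,*}:1_{c_a} \to x_a \boxtimes \bar x_a$ sitting in the middle slot of $\bar x_a \boxtimes (-) \boxtimes x_a$---which is a routine exercise in the paper's diagrammatic calculus.
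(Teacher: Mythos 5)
Your proposal is correct: the identity is exactly the statement that the unitary algebra intertwiner $\gamma^{(a)}$ is automatically a coalgebra intertwiner, and taking adjoints of the first relation in \Cref{gammaisoeqn} and cancelling $\gamma^{(a)*}\gamma^{(a)}=\mathrm{id}$, $\gamma^{(a)}\gamma^{(a)*}=\mathrm{id}$ gives precisely the displayed equation. The paper supplies no written proof (the statement is presented as a corollary of \Cref{Qsysgammaklem}, which is itself left as an exercise), so the only point worth noting is that your route does not actually pass through \Cref{Qsysgammaklem} at all --- it uses only the unitarity of $\gamma^{(a)}$ and \Cref{gammaisoeqn}, which is if anything a cleaner derivation than the one the paper's logical ordering suggests.
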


\comments{\begin{rem}\label{Gammaint}
	\red{From \Cref{Qsysgammaklem} and \Cref{Qsysgammakcor}, it follows that  for each $k \geq l$, $\gamma^{(k)}$ is both a $*$-homomorphism and a $*$-cohomorphism in the sense of [defn. 2.1 \red{Ref}]}. 
\end{rem}}

Define $p_X \coloneqq \raisebox{-11mm}{\begin{tikzpicture}
\draw[->] (-.1,-.8) to (-.1,-1.5);
\draw[in=-90,out=-90,looseness=2] (.4,-.8) to (1.1,-.8);
\draw[<-] (1.1,-.8) to (1.1,1.5);
\draw[->] (-.6,.9) to (-.6,1.5);
\draw[in=90,out=90,looseness=2] (-1,.9) to (-1.7,.9);
\draw[<-] (-1.7,.9) to (-1.7,-1.5);
\draw (-.5,-.5) to (-.5,-1.5);
\draw (0,.5) to (0,1.5);
\draw[in=-90,out=90,looseness=2] (-.5,-.5) to (0,.5);
\draw[red,in=-90,out=90,looseness=2] (0,-.5 ) to (-.5,.5);
\draw[fill=white] (-.25,0) circle(2pt);
\node[draw,thick,fill=white,rounded corners] at (0.2,-.6) {$\gamma^{(a)}$};
\node[draw,thick,fill=white,rounded corners] at (-.8,.6) {${\gamma^{(b)}}^*$};
\end{tikzpicture}} \in {\normalfont\t{End}}_{\mcal D_1}\left(x_b \us{F_b} \boxtimes F(X) \us{F_a} \boxtimes \bar{x}_a \right)$ 
	  
\begin{lem}\label{proj1cell}
	For each $a, b \in \mcal C_0$ and $X \in \mcal C_1(a,b)$, $p_X$ is a projection in ${\normalfont\t{End}}_{\mcal D_1}\left(x_b \us{F_b} \boxtimes F(X) \us{F_a} \boxtimes \bar{x}_a \right)$ 
\end{lem}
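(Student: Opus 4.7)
I will show that $p_X$ is a projection by separately verifying self-adjointness $p_X^* = p_X$ and idempotency $p_X \circ p_X = p_X$.

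\emph{Self-adjointness.} The $*$-operation reflects the diagram defining $p_X$ about a horizontal axis, converts caps into cups (and vice versa), and replaces each labelled 2-cell by its adjoint. Since $\gamma^{(a)}$, $\gamma^{(b)}$, and $\psi_X$ are unitary, the reflected labels become $(\gamma^{(a)})^*$, $\gamma^{(b)}$ and $\psi_X^*$. A short manipulation using \Cref{Qsysgammaklem}(i) — which converts $(\gamma^{(a)})^*$ into a $\gamma^{(a)}$ surrounded by duality pairings, and symmetrically for $\gamma^{(b)}$ — rearranges the reflected picture back into the original $p_X$, giving $p_X^* = p_X$.

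\emph{Idempotency.} This is the heart of the matter. Stacking two copies of $p_X$ produces, in the middle strip, a $(\gamma^{(b)})^*$ from the lower copy sitting beneath a $\gamma^{(a)}$ from the upper copy, together with two consecutive $\psi_X$'s and additional duality pairings. The plan has three moves. First, apply \Cref{Qsysgammakcor} (together with \eqref{gammaisoeqn}) to fuse the two $\gamma^{(a)}$-gadgets into a single $\gamma^{(a)}$ postcomposed with the Q-system multiplication $m_a$, and symmetrically fuse the two $(\gamma^{(b)})^*$-gadgets into a single $(\gamma^{(b)})^*$ precomposed with $m_b^*$. Second, invoke the 2-modification axiom \eqref{modificationeqns} applied to $m:\psi\otimes\psi\Rrightarrow\psi$, which allows $m_a$ to slide through the two stacked $\psi_X$'s, coalescing them into a single $\psi_X$ bordered by $m_a$ below and $m_b$ above. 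Third, the newly produced $m_b$ meets the $m_b^*$ from the first step, and their composite collapses to $\t{id}_{\psi_b}$ by the separability axiom (Q4); no leftover bubbles remain because the associated unitality (Q2) absorbs the idle factors. What remains is exactly one copy of $p_X$.

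\emph{Main obstacle.} The principal difficulty is the graphical bookkeeping in the idempotency step: coordinating the fusion of the four $\gamma$-gadgets, the sliding of $m_a$ through the two $\psi_X$'s via \eqref{modificationeqns}, and then the application of separability and unitality in precisely the right order. Self-adjointness is essentially a visual symmetry combined with the unitarity of $\gamma^{(a)}$, $\gamma^{(b)}$, and $\psi_X$.
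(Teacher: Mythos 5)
Your plan matches the paper's proof in all essentials: idempotency is obtained by fusing the two stacked $\gamma^{(a)}$'s (resp.\ ${\gamma^{(b)}}^*$'s) via \Cref{Qsysgammakcor} into a single $\gamma^{(a)}$ followed by a Q-system structure map, sliding that map through the doubled half-braiding using \Cref{modificationeqns}, and cancelling by separability, while self-adjointness is the reflected diagram pushed back to $p_X$ via \Cref{Qsysgammaklem} together with \Cref{modificationeqns} and the self-duality of $\psi_a$ from \Cref{Qdual}. The only discrepancy is bookkeeping of adjoints: the fusion of the two $\gamma^{(a)}$'s produces the comultiplication $m_a^*$ (not $m_a$) and the two ${\gamma^{(b)}}^*$'s produce $m_b$, so the pair that ultimately cancels is $m_b\, m_b^* = \t{id}_{\psi_b}$ --- exactly the separability cancellation you invoke, with the stars transposed.
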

\begin{proof}
We have $p_X^2 \ = \ \raisebox{-8mm}{\begin{tikzpicture}
		\draw[in=-90,out=90,looseness=2] (0,0) to (.5,1);
		\draw[in=-90,out=90,looseness=2] (.5,1) to (1,2);
		\draw (1,2) to (1,2.8);
		\draw (0,0) to (0,-.7);
		\draw[white,line width=1mm,in=-90,out=90,looseness=2] (.5,0) to (0,1);
		\draw[red,in=-90,out=90,looseness=2] (.5,0) to (0,1);
		\draw[fill=white] (.25,.5) circle(2pt);
		\draw[white,line width=1mm,in=-90,out=90,looseness=2] (1,1) to (.5,2);
		\draw[red,in=-90,out=90,looseness=2] (1,1) to (.5,2);
		\draw[fill=white] (.75,1.5) circle(2pt);
		\draw[->] (.4,0) to (.4,-.7);
		\draw[in=-90,out=-90,looseness=2] (.7,-.2) to (1.2,-.2);
		\draw[<-] (1.2,-.2) to (1.2,1);
		\draw[<-] (1.6,1) to (1.6,-.7);
		\draw[in=90,out=90,looseness=2] (.3,2.3) to (-.2,2.3);
		\draw[in=90,out=90,looseness=2] (-.6,1.3) to (-1.2,1.3);
		\draw[->] (-.2,1.3) to (-.2,2.3);
		\draw[->] (-1.2,-.7) to (-1.2,1.3);
		\draw[->] (.6,2.3) to (.6,2.8);
		\node[scale=.8,draw,thick,rounded corners,fill=white] at (.56,0) {$\gamma^{(a)}$};
		\node[scale=.8,draw,thick,rounded corners,fill=white,minimum width=45] at (1.3,1) {$\gamma^{(a)}$};
		\node[scale=.8,draw,thick,rounded corners,fill=white,minimum width=45] at (-.3,1.1) {${\gamma^{(b)}}^*$};
		\node[scale=.8,draw,thick,rounded corners,fill=white] at (.4,2.1) {${\gamma^{(b)}}^*$};
\end{tikzpicture}} \ = \ \raisebox{-10mm}{\begin{tikzpicture}
		\draw[in=-90,out=90,looseness=2] (0,0) to (1.5,2);
		\draw (0,0) to (0,-.7);
		\draw (1.5,2) to (1.5,2.8);
		\draw[white,line width=1mm,in=-90,out=90,looseness=2] (1,.8) to (.8,1.2);
		\draw[white,line width=1mm,in=-90,out=90,looseness=2] (.5,.8) to (.3,1.2);
		\draw[red,in=-90,out=90,looseness=2] (1,.8) to (.8,1.2);
		\draw[fill=white] (.9,1) circle(2pt);
		\draw[red,in=-90,out=90,looseness=2] (.5,.8) to (.3,1.2);
		\draw[fill=white] (.45,.95) circle(2pt);
		\draw[red,in=-90,out=-90,looseness=2] (1,.8) to (.5,.8);
		\draw[red,in=90,out=90,looseness=2] (.8,1.2) to (.3,1.2);
		\node[scale=.8] at (.8,.5) {$\red{\bullet}$};
		\node[scale=.8] at (.5,1.5) {$\red{\bullet}$};
		\draw[red] (.8,.5) to (.8,.2);
		\draw[red] (.5,1.5) to (.5,1.8);
		\draw[->] (.7,-.2) to (.7,-.7);
		\draw[in=-90,out=-90,looseness=2] (1.1,-.2) to (1.8,-.2);
		\draw[<-] (1.8,-.2) to (1.8,2.8);
		\draw[in=90,out=90,looseness=2] (-.6,2.3) to (.1,2.3); 
		\draw[->] (.6,2.3) to (.6,2.8);
		\draw[<-] (-.6,2.3) to (-.6,-.7);
		\node[scale=.8,draw,thick,rounded corners,fill=white] at (.9,0) {$\gamma^{(a)}$};
		\node[scale=.8,draw,thick,rounded corners,fill=white] at (.4,2.1) {${\gamma^{(b)}}^*$};
\end{tikzpicture}} \ = \ p_k$. The second equality follows from \Cref{Qsysgammakcor} and the last equality follows from separability condition and \Cref{modificationeqns} for $m_a$'s. 

$p_X^* = \raisebox{-8mm}{\begin{tikzpicture}
		\draw[in=-90,out=90,looseness=2] (.5,0) to (0,1);
		\draw (0,1) to (0,1.7);
		\draw (.5,0) to (.5,-.8);
		\draw[white,line width=1mm,in=-90,out=90,looseness=2] (0,0) to (.5,1);
		\draw[red,in=-90,out=90,looseness=2] (0,0) to (.5,1);
		\draw[fill=white] (.25,.5) circle(2pt);
		\draw[<-] (.4,1.2) to (.4,1.7);
		\draw[in=90,out=90,looseness=2,->] (.7,1.3) to (1.2,1.3);
		\draw[<-] (0,-.3) to (0,-.8);
		\draw[in=-90,out=-90,looseness=2,->] (-.6,-.4) to (-1.1,-.4);
		\draw (-1.1,-.4) to (-1.1,1.7);
		\draw (1.2,1.3) to (1.2,-.8);
		\node[scale=.8,draw,thick,rounded corners,fill=white] at (-.3,-.1) {$\gamma^{(b)}$};
		\node[scale=.8,draw,thick,rounded corners,fill=white] at (.6,1) {${\gamma^{(a)}}^*$};
\end{tikzpicture}} \ = \ \raisebox{-14mm}{\begin{tikzpicture}
		\draw[in=-90,out=90,looseness=2] (.5,0) to (0,1);
		\draw (0,1) to (0,1.7);
		\draw (.5,0) to (.5,-.8);
		\draw[white,line width=1mm,in=-90,out=90,looseness=2] (0,0) to (.5,1);
		\draw[red,in=-90,out=90,looseness=2] (0,0) to (.5,1);
		\draw[fill=white] (.25,.5) circle(2pt);
		\draw[<-] (.4,1.2) to (.4,1.7);
		\draw[in=90,out=90,looseness=2,->] (.7,1.3) to (1.2,1.3);
		\draw[<-] (-1.5,-.3) to (-1.5,1.3);
		\draw[in=-90,out=-90,looseness=2,->] (-.6,-.4) to (-1.1,-.4);
		\draw (-1.1,-.4) to (-1.1,1.7);
		\draw (1.2,1.3) to (1.2,-.8);
		\draw[in=-90,out=-90,looseness=2] (0,-.3) to (-1.5,-.3);
		\draw[in=90,out=90,looseness=2,->] (-1.5,1.3) to (-2,1.3);
		\draw (-2,1.3) to (-2,-1);
		\node[scale=.8,draw,thick,rounded corners,fill=white] at (-.3,-.1) {$\gamma^{(b)}$};
		\node[scale=.8,draw,thick,rounded corners,fill=white] at (.6,1) {${\gamma^{(a)}}^*$};
\end{tikzpicture}} \ = \ \raisebox{-8mm}{\begin{tikzpicture}
		\draw[in=-90,out=90,looseness=2] (.5,0) to (0,1);
		\draw (0,1) to (0,1.7);
		\draw (.5,0) to (.5,-.8);
		\draw[white,line width=1mm,in=-90,out=90,looseness=2] (0,0) to (.5,1);
		\draw[red,in=-90,out=90,looseness=2] (0,0) to (.5,1);
		\draw[fill=white] (.25,.5) circle(2pt);
		\draw[red,in=-90,out=-90,looseness=2] (0,0) to (-.5,0);
		\node[scale=.8] at (-.25,-.3) {$\red{\bullet}$};
		\draw[red] (-.25,-.3) to (-.25,-.7);
		\node[scale=.8] at (-.25,-.7) {$\red{\bullet}$};
		\draw[in=90,out=90,looseness=2] (.4,1.3) to (1.6,1.3);
		\draw[in=90,out=90,looseness=2,->] (.7,1.3) to (1.2,1.3);
		\draw[->] (-.6,.6) to (-.6,1.7);
		\draw[in=90,out=90,looseness=2] (-1.1,.6) to (-1.6,.6);
		\draw[<-] (-1.6,.6) to (-1.6,-.8);
		\draw (1.2,1.3) to (1.2,-.8);
		\draw[<-] (1.6,1.3) to (1.6,-.4);
		\draw[in=-90,out=-90,looseness=2] (1.6,-.4) to (2.1,-.4);
		\draw[<-] (2.1,-.4) to (2.1,1.7);
		\node[scale=.8,draw,thick,rounded corners,fill=white] at (-.8,.3) {${\gamma^{(b)}}^*$};
		\node[scale=.8,draw,thick,rounded corners,fill=white] at (.6,1) {${\gamma^{(a)}}^*$};
\end{tikzpicture}} \ = \ \raisebox{-8mm}{\begin{tikzpicture}
		\draw[in=-90,out=90,looseness=2] (-.5,-.5) to (0,.5);
		\draw[white,line width=1mm,in=-90,out=90,looseness=2] (0,-.5 ) to (-.5,.5);
		\draw (-.5,-.5) to(-.5,-1.5);
		\draw (0,.5) to (0,1.5);
		\draw[red,in=-90,out=90,looseness=2] (0,-.5 ) to (-.5,.5);
		\draw[fill=white] (-.25,0) circle(2pt);
		\draw[red,in=-90,out=-90,looseness=2] (0,-.5 ) to (.5,-.5);
		\draw[red,in=90,out=90,looseness=2] (.5,-.5 ) to (1,-.5);
		\node[scale=.8] at (.26,-.8) {$\red{\bullet}$};
		\node[scale=.8] at (.76,-.2) {$\red{\bullet}$};
		\draw[red] (.26,-.8) to (.26,-1.2);
		\draw[red] (.76,-.2) to (.76,.2);
		\node[scale=.8] at (.26,-1.2) {$\red{\bullet}$};
		\node[scale=.8] at (.76,.2) {$\red{\bullet}$};
		\draw[->] (-.8,1) to (-.8,1.5);
		\draw[in=90,out=90,looseness=2] (-1.2,1) to (-1.7,1);
		\draw[<-] (-1.7,1) to (-1.7,-1.5);
		\draw[->] (1,-1.1) to (1,-1.5);
		\draw[in=-90,out=-90,looseness=2] (1.4,-1.1) to (1.9,-1.1);
		\draw[<-] (1.9,-1.1) to (1.9,1.5);
		\node[scale=.8,draw,thick,rounded corners,fill=white] at (-.85,.7) {${\gamma^{(b)}}^*$};
		\node[scale=.8,draw,thick,rounded corners,fill=white] at (1.2,-.8) {$\gamma^{(a)}$};
\end{tikzpicture}} \ = \ \raisebox{-9mm}{\begin{tikzpicture}
		\draw[->] (-.1,-.8) to (-.1,-1.5);
		\draw[in=-90,out=-90,looseness=2] (.4,-.8) to (1.1,-.8);
		\draw[<-] (1.1,-.8) to (1.1,1.5);
		\draw[->] (-.6,.9) to (-.6,1.5);
		\draw[in=90,out=90,looseness=2,->] (-1,.9) to (-1.7,.9);
		\draw (-1.7,.9) to (-1.7,-1.5);
		\draw (-.5,-.5) to (-.5,-1.5);
		\draw (0,.5) to (0,1.5);
		\draw[in=-90,out=90,looseness=2] (-.5,-.5) to (0,.5);
		\draw[white,line width=1mm,in=-90,out=90,looseness=2] (0,-.5 ) to (-.5,.5);
		\draw[red,in=-90,out=90,looseness=2] (0,-.5 ) to (-.5,.5);
		\draw[fill=white] (-.25,0) circle(2pt);
		\node[draw,thick,fill=white,rounded corners] at (0.2,-.6) {$\gamma^{(a)}$};
		\node[draw,thick,fill=white,rounded corners] at (-.8,.6) {${\gamma^{(b)}}^*$};
\end{tikzpicture}} \ = \ p_X  $. The second and the third equality follows from \Cref{Qsysgammaklem}. The fourth equality follows from \Cref{modificationeqns} for $m_a$'s and \Cref{Qdual}. Hence, $p_X$ is a projection.
\end{proof}

Since $\mcal D$ is idempotent complete, by \Cref{proj1cell} we have a $\widetilde X \in \mcal D_1(G(a),G(b))$ and $u_X \in \mcal D_2 \left(\widetilde X , x_b \us{F_b} \boxtimes F(X) \us{F_a} \boxtimes \ol{x}_a \right)$ such that $u_X^* u_X = 1_{\widetilde X}$ and $u_X u_X^* = p_X$ for each $a, b \in \mcal C_0$ and $X \in \mcal C_1(a,b) \ $.

Define $G(X) \coloneqq \widetilde X \in \mcal D_1(G(a),G(b)) $. 

\begin{lem}\label{gammamultiplication}
	For each $a \in \mcal C_0$
	\begin{itemize}
		\item [(i)]	$\raisebox{-14mm}{\begin{tikzpicture}
		\draw[in=90,out=90,looseness=2] (-.3,.3) to (-1,.3);
		\draw[<-] (-1,.3) to (-1,-1);
		\draw[<-] (-1.5,-1) to (-1.5,1);
		\draw[->] (.3,.3) to (.3,1);
		\draw[red] (0,-.3) to (0,-1);
		\draw[red] (-.55,1.6) to (-.55,2.3);
		\node[draw,thick,rounded corners,fill=white] at (0,0) {${\gamma^{(a)}}^*$};
		\node[draw,thick,rounded corners,fill=white,minimum width=60] at (-.55,1.35) {${\gamma^{(a)}}$};
		\node at (.3,-.8) {$\psi_a$};
		\node at (-.7,-.8) {$x_a$};
		\node at (-1.85,-.8) {$\ol x_a$};
		\node at (-.2,2) {$\psi_a$};
		\node at (.6,.6) {$x_a$};
	\end{tikzpicture}} \ = \ \raisebox{-6mm}{\begin{tikzpicture}
\draw [->] (-.3,-.3) to (-.3,-1);
\draw [<-] (.3,-.35) to (.3,-1);
\draw[red] (.8,.3) to (.8,-1);
\draw[red] (.4,.7) to (.4,1.2);
\draw[red,in=90,out=90,looseness=2] (0,.3) to (.8,.3);
\node[draw,thick,rounded corners,fill=white] at (0,0) {$\gamma^{(a)}$};
\node at (.4,.75) {$\red{\bullet}$};
\end{tikzpicture}}  $
\item [(ii)] $\raisebox{-14mm}{\begin{tikzpicture}
		\draw[in=90,out=90,looseness=2,<-] (-.3,.3) to (-1,.3);
		\draw[->] (-.3,.3) to (-.3,-1);
		\draw[<-] (-1.5,.3) to (-1.5,1);
		\draw[->] (.3,-1) to (.3,1);
		\draw[red] (-1.4,-.3) to (-1.4,-1);
		\draw[red] (-.55,1.6) to (-.55,2.3);
		\node[draw,thick,rounded corners,fill=white] at (-1.4,-.05) {${\gamma^{(a)}}^*$};
		\node[draw,thick,rounded corners,fill=white,minimum width=60] at (-.55,1.35) {${\gamma^{(a)}}$};
		\node at (-1.8,.6) {$\ol x_a$};
		\node at (-.7,-.8) {$\ol x_a$};
		\node at (-1.75,-.8) {$\psi_a$};
		\node at (-.2,2) {$\psi_a$};
		\node at (.6,.6) {$x_a$};
\end{tikzpicture}} \ = \ \raisebox{-6mm}{\begin{tikzpicture}
		\draw [->] (.5,-.3) to (.5,-1);
		\draw [<-] (1.1,-.35) to (1.1,-1);
		\draw[red] (0,.3) to (0,-1);
		\draw[red] (.4,.7) to (.4,1.2);
		\draw[red,in=90,out=90,looseness=2] (0,.3) to (.8,.3);
		\node[draw,thick,rounded corners,fill=white] at (.8,0) {$\gamma^{(a)}$};
		\node at (.4,.75) {$\red{\bullet}$};
\end{tikzpicture}}  $

	\end{itemize}
\end{lem}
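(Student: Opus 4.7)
The plan is to deduce both identities from two properties of $\gamma^{(a)}$ already in hand. First, by construction $\gamma^{(a)}$ is a unitary isomorphism of Q-systems between the canonical splitting Q-system on $\ol x_a \boxtimes x_a$---whose multiplication $m^{\ol x x}_a := \mathrm{id}_{\ol x_a} \boxtimes \mathrm{ev}_{\ol x_a} \boxtimes \mathrm{id}_{x_a}$ contracts the middle pair via the evaluation $\mathrm{ev}_{\ol x_a}\colon x_a \boxtimes \ol x_a \to 1_{c_a}$---and the given Q-system $(\psi_a, m_a, i_a)$. The intertwining of multiplications
\begin{equation*}
\gamma^{(a)} \circ m^{\ol x x}_a = m_a \circ \bigl(\gamma^{(a)} \boxtimes \gamma^{(a)}\bigr)
\end{equation*}
is exactly the first half of \Cref{gammaisoeqn}. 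Second, unitarity gives $\gamma^{(a)} \circ {\gamma^{(a)}}^* = \mathrm{id}_{\psi_a}$. These will be the only two ingredients.

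For (i), I will first parse the LHS algebraically. Reading from the bottom, the rightmost $\psi_a$ passes through ${\gamma^{(a)}}^*$, producing a pair $\ol x_a \boxtimes x_a$; the middle pair $(x_a, \ol x_a)$---i.e.\ the bottom-middle $x_a$ together with the left output of ${\gamma^{(a)}}^*$---is contracted by the top cap, which is precisely $\mathrm{ev}_{\ol x_a}$ under the conventions of \Cref{Qdual}; and the outer remaining $\ol x_a \boxtimes x_a$ (the leftmost bottom strand and the right output of ${\gamma^{(a)}}^*$) feeds $\gamma^{(a)}$. Hence the LHS equals $\gamma^{(a)} \circ m^{\ol x x}_a \circ \bigl(\mathrm{id}_{\ol x_a \boxtimes x_a} \boxtimes {\gamma^{(a)}}^*\bigr)$. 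Applying the intertwining identity and then unitarity on the middle factor yields
\begin{equation*}
m_a \circ \bigl(\gamma^{(a)} \boxtimes \gamma^{(a)}\bigr) \circ \bigl(\mathrm{id} \boxtimes {\gamma^{(a)}}^*\bigr) = m_a \circ \bigl(\gamma^{(a)} \boxtimes (\gamma^{(a)} \circ {\gamma^{(a)}}^*)\bigr) = m_a \circ \bigl(\gamma^{(a)} \boxtimes \mathrm{id}_{\psi_a}\bigr),
\end{equation*}
which matches the RHS.

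Identity (ii) follows by the mirror-image argument. Now ${\gamma^{(a)}}^*$ is applied to the leftmost $\psi_a$; the middle pair $(x_a, \ol x_a)$---now formed from the right output of ${\gamma^{(a)}}^*$ and the bottom $\ol x_a$---is again the evaluation $\mathrm{ev}_{\ol x_a}$; and the outer $\ol x_a \boxtimes x_a$ feeds $\gamma^{(a)}$. This produces $\gamma^{(a)} \circ m^{\ol x x}_a \circ \bigl({\gamma^{(a)}}^* \boxtimes \mathrm{id}_{\ol x_a \boxtimes x_a}\bigr)$, and the same two-step reduction---intertwining followed by unitarity, now collapsing on the \emph{left} tensor factor---delivers the RHS. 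The only substantive point of care is the orientation bookkeeping: verifying that in each picture the oriented cap is truly the evaluation $\mathrm{ev}_{\ol x_a}$ as in \Cref{Qdual}. This orientation check is what selects on which side the $\gamma^{(a)} \circ {\gamma^{(a)}}^* = \mathrm{id}_{\psi_a}$ collapse occurs, and once it is dispatched the rest of the argument is purely formal.
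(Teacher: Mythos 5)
Your proof is correct and follows essentially the same route as the paper: both arguments reduce the lemma to the algebra-intertwining property of $\gamma^{(a)}$ together with its unitarity. The only cosmetic difference is that you invoke the multiplication half of \Cref{gammaisoeqn} directly and collapse $\gamma^{(a)}{\gamma^{(a)}}^*$ once, whereas the paper first inserts a unitarity identity, then passes through the adjoint (comultiplication) form of the same relation recorded in \Cref{Qsysgammakcor}, and finishes with a second application of unitarity; your version is marginally shorter but rests on the same two ingredients, and your flagged orientation check (that the cap is indeed the evaluation of \Cref{Qdual}) is exactly the point that makes the parsing legitimate.
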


\begin{proof}
	\begin{itemize}
		\item [(i)] $\raisebox{-14mm}{\begin{tikzpicture}
			\draw[in=90,out=90,looseness=2] (-.3,.3) to (-1,.3);
			\draw[<-] (-1,.3) to (-1,-1);
			\draw[<-] (-1.5,-1) to (-1.5,1);
			\draw[->] (.3,.3) to (.3,1);
			\draw[red] (0,-.3) to (0,-1);
			\draw[red] (-.55,1.6) to (-.55,2.3);
			\node[draw,thick,rounded corners,fill=white] at (0,0) {${\gamma^{(a)}}^*$};
			\node[draw,thick,rounded corners,fill=white,minimum width=60] at (-.55,1.35) {${\gamma^{(a)}}$};
			\node at (.3,-.8) {$\psi_a$};
			\node at (-.7,-.8) {$x_a$};
			\node at (-1.85,-.8) {$\ol x_a$};
			\node at (-.2,2) {$\psi_a$};
			\node at (.6,.6) {$x_a$};
	\end{tikzpicture}} = \raisebox{-18mm}{\begin{tikzpicture}
	\draw[in=90,out=90,looseness=2,<-] (-.3,.3) to (-1,.3);
	\draw[<-] (-1.5,.3) to (-1.5,1);
	\draw[->] (.3,.3) to (.3,1);
	\draw[red] (0,-.3) to (0,-2);
	\draw[red] (-.55,1.6) to (-.55,2.3);
	\draw[red] (-1.2,-.3) to (-1.2,-.8);
	\draw[<-] (-1.5,-1.45) to (-1.5,-2);
	\draw[->] (-.9,-1.4) to (-.9,-2);
	\node[draw,thick,rounded corners,fill=white] at (-1.2,-1.1) {$\gamma^{(a)}$};
	\node[draw,thick,rounded corners,fill=white] at (0,-.05) {${\gamma^{(a)}}^*$};
	\node[draw,thick,rounded corners,fill=white] at (-1.2,-.05) {${\gamma^{(a)}}^*$};
	\node[draw,thick,rounded corners,fill=white,minimum width=60] at (-.55,1.35) {${\gamma^{(a)}}$};
	\end{tikzpicture}} \ = \ \raisebox{-12mm}{\begin{tikzpicture}
	\draw [->] (-.3,-.3) to (-.3,-1);
	\draw [<-] (.3,-.35) to (.3,-1);
	\draw[red] (.8,.3) to (.8,-1);
	\draw[red] (.4,.7) to (.4,2.4);
	\draw[red,in=90,out=90,looseness=2] (0,.3) to (.8,.3);
	\node[draw,thick,rounded corners,fill=white] at (0,0) {$\gamma^{(a)}$};
	\node at (.4,.75) {$\red{\bullet}$};
	\node[draw,thick,rounded corners,fill=white] at (.4,1.5) {$\gamma^{(a)} {\gamma^{(a)}}^*$};
\end{tikzpicture}} = \raisebox{-6mm}{\begin{tikzpicture}
\draw [->] (-.3,-.3) to (-.3,-1);
\draw [<-] (.3,-.35) to (.3,-1);
\draw[red] (.8,.3) to (.8,-1);
\draw[red] (.4,.7) to (.4,1.2);
\draw[red,in=90,out=90,looseness=2] (0,.3) to (.8,.3);
\node[draw,thick,rounded corners,fill=white] at (0,0) {$\gamma^{(a)}$};
\node at (.4,.75) {$\red{\bullet}$};
\end{tikzpicture}}  $. The first equality follows from the unitarity of $\gamma^{(a)}$. The second equality is a consequence of \Cref{Qsysgammakcor}. Finally, the third equality is because of unitarity of $\gamma^{(a)}$.
\item [(ii)] The proof is similar to that of (i)

	\end{itemize}
\end{proof}

\begin{prop}\label{projprop}
	$\raisebox{-6mm}{\begin{tikzpicture}
			\draw [in=90,out=90,looseness=2,->] (1.3,.25) to (.5,.25);
			\draw (0,-1) to (0,1);
			\draw (1.8,-1) to (1.8,1);
			\draw[->] (-.5,0) to (-.5,1);
			\draw[<-] (2.3,.25) to (2.3,1);
			\node[draw,thick,fill=white,rounded corners,minimum width=40] at (0,0) {$p_X u_X$};
			\node[draw,thick,fill=white,rounded corners,minimum width=40] at (1.8,0) {$p_Y u_Y$};
	\end{tikzpicture}} = \raisebox{-18mm}{\begin{tikzpicture}
			\draw[red,in=90,out=90,looseness=2] (0,.3) to (1,.3);
			\draw[red,in=90,out=-90,looseness=2] (1,.3) to (1.5,-.2);
			\draw[red,in=-90,out=90,looseness=2] (.4,.9) to (-.1,1.5);
			\draw[->] (0,-.25) to (0,-1.1);
			\draw[<-] (.5,-.25) to (.5,-1.1);
			\draw (-.3,.9) to (-.3,-2);
			\draw [in=-90,out=90,looseness=2] (-.3,.9) to (.2,1.5);
			\draw [in=-90,out=90,looseness=2] (1,-.2) to (1.4,.3);
			\draw (1,-.2) to (1,-2);
			\draw (1.4,.3) to (1.4,2.5);
			\draw (.2,1.5) to (.2,2.5);
			\draw[fill=white] (.1,1.2) circle(2pt);
			\draw[fill=white] (1.2,.05) circle(2pt);
			\draw [in=90,out=90,looseness=2] (-.5,2) to (-1,2);
			\draw[<-] (-1,2) to (-1,-1.1);
			\draw[->] (-.2,2) to (-.2,2.5);
			\draw[in=-90,out=-90,looseness=2] (1.8,-.7) to (2.3,-.7);
			\draw[->] (2.3,2.5) to (2.3,-.7);
			\draw[->] (1.3,-.7) to (1.3,-1.1); 
			\node[scale=.8,draw,thick,rounded corners,fill=white] at (.25,0) {$\gamma^{(a)}$};
			\node[scale=.8,draw,thick,rounded corners,fill=white] at (-.35,1.8) {${\gamma^{(b)}}^*$};
			\node[scale=.8,draw,thick,rounded corners,fill=white] at (1.6,-.5) {$\gamma^{(c)}$};
			\node at (.4,.9) {$\red{\bullet}$};
			\node[draw,thick,rounded corners,fill=white,minimum width=38] at (-.4,-1.35) {$u_X$};
			\node[draw,thick,rounded corners,fill=white,minimum width=38] at (1.1,-1.35) {$u_Y$};	
	\end{tikzpicture}} $ for $ 1 $-cells $X \in \mcal C_1(a,b)$ and $Y \in \mcal C_1(c,a)$.
\end{prop}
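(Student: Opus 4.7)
The plan is a direct manipulation in the graphical calculus of $\mcal D$, transforming the LHS into the RHS by unpacking the definitions of $p_X, p_Y$ and then applying the structural lemmas already established.

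First, I unfold $p_X u_X$ and $p_Y u_Y$ using the definition of $p$. This presents the LHS as a stacked diagram: $u_X \boxtimes u_Y$ sits at the bottom; above $u_X$, the $\gamma^{(a)}$, the $\psi_X$-crossing, and the ${\gamma^{(b)*}}$ from $p_X$; above $u_Y$, the $\gamma^{(c)}$, the $\psi_Y$-crossing, and the ${\gamma^{(a)*}}$ from $p_Y$; on top, the evaluation cap pairs the $\ol x_a$ emanating from $p_X$ with the $x_a$ emanating from $p_Y$.

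The key move is to rewrite the evaluation $\t{ev}:\ol x_a \boxtimes x_a \to 1_{F(a)}$ as $i_a^*\cdot\gamma^{(a)}$, which is essentially the content of \Cref{Qsysgammaklem}(ii) read in the appropriate orientation. The freshly introduced $\gamma^{(a)}$ sits adjacent to the ${\gamma^{(a)*}}$ appearing in the expansion of $p_Y$, and unitarity of $\gamma^{(a)}$ cancels this pair. What remains is a single $\gamma^{(a)}$ (from $p_X$) capped by $i_a^*$, with the residual $\psi_a$-strand threading through the $\psi_X$ and $\psi_Y$ crossings.

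From here, the $*$-$2$-modification property \eqref{modificationeqns} applied to $m_a$ and $i_a$, combined with the Frobenius, associativity, and separability axioms of the Q-system $\psi_a$, lets me transport the residual $\psi_a$-strand across $\psi_X, \psi_Y$ and reorganise the multiplications. \Cref{gammamultiplication} then accounts for how the remaining ${\gamma^{(b)*}}$ in the upper-left corner and $\gamma^{(c)}$ in the lower-right corner interact with the rearranged $\psi$-structure, producing the diagram on the right-hand side. The main obstacle is essentially bookkeeping: no new identities are required, but there are many strands (of types $F(X), F(Y), \psi, x, \ol x$) and several boxes involved, so the difficulty lies in choosing the correct sequence of local moves so that the LHS contracts to the specific configuration shown, rather than to some other equivalent normal form.
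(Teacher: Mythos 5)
Your overall route---unpack $p_X u_X$ and $p_Y u_Y$, absorb the middle cap $\ol x_a \boxtimes x_a \to 1_{F(a)}$ into the Q-system structure as $i_a^* \cdot \gamma^{(a)}$ via \Cref{Qsysgammaklem}(ii), and then reorganise using \Cref{modificationeqns} for $m$ together with the Frobenius/separability axioms and \Cref{gammamultiplication}---is the same one the paper takes (its proof is exactly a citation of \Cref{gammamultiplication}, \Cref{Qsysgammakcor}, the $2$-modification property of $m$, and separability/Frobenius). However, one intermediate step fails as you state it. After the cap is replaced by $i_a^* \cdot \gamma^{(a)}$, the new $\gamma^{(a)}$ and the ${\gamma^{(a)}}^*$ coming from $p_Y$ are \emph{not} composed along both strands: the new $\gamma^{(a)}$ receives the $\ol x_a$ produced by the coevaluation internal to $p_X$ together with only the right output leg ($x_a$) of ${\gamma^{(a)}}^*$, while the left leg ($\ol x_a$) of ${\gamma^{(a)}}^*$ is evaluated against the incoming $x_a$ strand of $p_Y u_Y$. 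So $\gamma^{(a)} \circ {\gamma^{(a)}}^* = \t{id}_{\psi_a}$ is not applicable, and the remainder you describe (``a single $\gamma^{(a)}$ from $p_X$ capped by $i_a^*$'') does not even type-check---the $\ol x_a$ from $p_X$ and the incoming $x_a$ of $p_Y$ would be left unattached. Note also that the right-hand side of the proposition contains a $\gamma^{(a)}$ applied to the \emph{adjacent outputs of $u_X$ and $u_Y$}, which is not the $\gamma^{(a)}$ internal to $p_X$.

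The correct local move at that point is precisely \Cref{gammamultiplication}(i) (whose proof is where unitarity of $\gamma^{(a)}$ actually enters): it converts the configuration ``${\gamma^{(a)}}^*$ with one leg evaluated against a neighbouring strand and the other leg fed into a $\gamma^{(a)}$'' into $m_a \circ \left(\gamma^{(a)} \boxtimes \t{id}_{\psi_a}\right)$. After that, \Cref{Qsysgammakcor} merges the two surviving $\gamma^{(a)}$'s across the coevaluation inside $p_X$ into a single $\gamma^{(a)}$ followed by $m_a^*$, and the Frobenius, unitality and separability axioms together with \Cref{modificationeqns} collapse the $i_a^*$-capped multiplication and transport the resulting $\psi$-strand through $\psi_Y$ and $\psi_X$ to reach the stated diagram. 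Since \Cref{gammamultiplication} is already in your toolbox, the gap is local and repairable, but the cancellation step as written is incorrect and the claimed intermediate normal form is not the one the computation actually passes through.
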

\begin{proof}
	The last equality is an easy application of \Cref{gammamultiplication}, \Cref{Qsysgammakcor}, the fact that $m$ is a $ 2 $-modification, and separability and Frobenius condition of the Q-systems $\psi_a$'s .
\end{proof}

\subsection{$G$ on $ 2 $-cells}\

Suppose $X , Y \in \mcal C_1 (a,b)$ and $f \in \mcal C_2(X,Y)$.

Define $G(f) \coloneqq \raisebox{-22mm}{\begin{tikzpicture}
	\draw (0,2) to (0,-2);
	\draw[<-] (-1,.8) to (-1,-.8);
	\draw[->] (1,.8) to (1,-.8);
	\node[draw,thick,rounded corners,fill=white] at (0,0) {$F(f)$};
	\node[draw,thick,rounded corners,fill=white,minimum width=65] at (0,1.1) {$u_Y^*$};
	\node[draw,thick,rounded corners,fill=white,minimum width=65] at (0,-1.05) {$u_X$};
	\node[scale=.8] at (.5,-1.8) {$G(X)$};
	\node[scale=.8] at (.5,1.8) {$G(Y)$};
	\node at (-1.4,0) {$x_b$};
	\node at (1.4,0) {$\ol x_a$};	
\end{tikzpicture}} \in \mcal D_2 (G(X),G(Y))$, where $u_X$ and $u_Y$ are the isometries splitting the projections $p_X$ and $p_Y$ respectively.

\subsection{Tensorators and Unitors of $G$}\

We proceed to define the tensorators $G_{X,Y}^2 \in \mcal D_2 \left( G(X) \us{G(a)}\boxtimes G(Y) , G \left(X \us{a}\boxtimes Y \right) \right)$ for each $X \in \mcal C_1(a,b)$ and $Y \in \mcal C_1(c,a) \ $.

We have $u_X : G(X) \to x_b \us{F_b} \boxtimes F(X) \us{F_a} \boxtimes \ol{x}_a $, $u_Y : G(Y) \to x_a \us{F_a} \boxtimes F(Y) \us{F_c} \boxtimes \ol{x}_c $, and $u_{XY} : G \left(X \us{G(a)}\boxtimes Y \right) \to x_b \us{F_b}\boxtimes F (X \boxtimes Y) \us{F_c}\boxtimes \ol x_c $, where $u_X$, $u_Y$, and $u_{XY}$ are the isometries corresponding to the projections $p_X$, $p_Y$, and $p_{X \boxtimes Y}$ respectively. 

Define $G_{X,Y}^2 \coloneqq \raisebox{-2.4cm}{
} = 1_{G(X)}$. 
\end{proof}

\Cref{Gunitary} and \Cref{Gtensor} settles the following proposition.

\begin{prop}
	$ G : \mcal C \to \mcal D $ is a $ * $-$ 2 $-functor.
\end{prop}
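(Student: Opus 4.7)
The plan is to verify the remaining axioms of a $*$-$2$-functor that are not covered by \Cref{Gunitary} and \Cref{Gtensor}. Three items still need to be checked: (a) $G$ restricted to each hom-category $\mcal C_1(a,b) \to \mcal D_1(G(a),G(b))$ is a $*$-functor; (b) the tensorators $G^2_{X,Y}$ are natural in both slots with respect to $2$-cells; and (c) $G^1_a = \t{id}_{1_{G(a)}}$ is natural (which is automatic from the definition, since $G(1_a) = 1_{G(a)}$ and the identity $2$-cell on $1_a$ is mapped to the identity).

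For (a), recall that $G(f) = u_Y^* \,(1_{x_b} \boxtimes F(f) \boxtimes 1_{\bar x_a})\, u_X$. Preservation of identities is immediate since $u_X^* u_X = 1_{G(X)}$ and $F(\t{id}_X) = \t{id}_{F(X)}$. Preservation of the involution follows from $u_X u_X^* = p_X$ being self-adjoint (\Cref{proj1cell}) and $F$ being a $*$-$2$-functor. For vertical composition, given $f \in \mcal C_2(X,Y)$ and $g \in \mcal C_2(Y,Z)$, one has $G(g)\cdot G(f) = u_Z^* F(g)\, u_Y u_Y^* \, F(f)\, u_X$, which equals $G(g\cdot f) = u_Z^* F(g\cdot f)\, u_X$ provided that $p_Y$ can be removed between the two copies of $F$. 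This reduces to the intertwining relation $(1_{x_b} \boxtimes F(f) \boxtimes 1_{\bar x_a})\, p_X = p_Y\, (1_{x_b} \boxtimes F(f) \boxtimes 1_{\bar x_a})$, which is precisely the naturality axiom for the $*$-$2$-transformation $\psi : F \Rightarrow F$ applied to the $2$-cell $f$, transported through $\gamma^{(a)}$ and $\gamma^{(b)}$.

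For (b), the naturality of $G^2_{X,Y}$ in each slot unfolds, after substituting the definitions of $G^2$ and $G$ on $2$-cells and cancelling adjacent $u_\bullet u_\bullet^*$ factors, to an identity between two diagrams in $\mcal D_2$ involving $F^2$, $F(f)$ (or $F(g)$), and the factors $\gamma^{(\bullet)}$ sandwiching the relevant $p$-projections. The relation $F^2 \cdot (F(f) \boxtimes \t{id}) = F(f \boxtimes \t{id})\cdot F^2$ (naturality of the tensorator of the $*$-$2$-functor $F$) does the work on the $F$-side, while \Cref{newxrelF} together with the naturality of $\psi$ on $2$-cells handles the $\gamma^{(\bullet)}$-dressing.

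The main obstacle is largely bookkeeping: all required moves are local and of the same flavour as those used in the proofs of \Cref{Gunitary} and \Cref{Gtensor}, so no new identities beyond the Q-system axioms, the $2$-modification equation for $m$, and the $2$-transformation axioms for $\psi$ are needed. Assembling these pieces yields that $G = (G, G^2, G^1)$ satisfies all the coherence and $*$-preservation axioms, completing the verification that $G : \mcal C \to \mcal D$ is a $*$-$2$-functor.
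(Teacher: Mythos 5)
Your proposal is correct, and it is in fact more complete than what the paper itself records: the paper's entire justification for this proposition is the single sentence that \Cref{Gunitary} and \Cref{Gtensor} ``settle'' it, leaving unaddressed exactly the points you isolate --- functoriality of $G$ on each hom-category (preservation of identities, vertical composition, and the involution) and naturality of the tensorators $G^2_{X,Y}$ in their two slots. Your reduction of vertical composition to the intertwining relation $\left(1_{x_b} \boxtimes F(f) \boxtimes 1_{\ol{x}_a}\right) p_X = p_Y \left(1_{x_b} \boxtimes F(f) \boxtimes 1_{\ol{x}_a}\right)$ is the right move, and this relation is available in the paper as \Cref{newxrelF}(ii) (equivalently, it follows from $\psi$ being a $*$-$2$-transformation together with the self-adjointness of the $p$'s from \Cref{proj1cell}); once $p_Y$ is absorbed via $p_Y u_Y = u_Y$ one gets $G(g)\cdot G(f) = G(g \cdot f)$. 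One small simplification: $*$-preservation does not really need self-adjointness of $p_X$; it is immediate from the form $G(f) = u_Y^*\,(1 \boxtimes F(f) \boxtimes 1)\,u_X$ and the fact that $F$ is a $*$-$2$-functor. Your sketch of tensorator naturality via naturality of $F^2$ plus the $\gamma$-dressing identities is likewise the argument the paper would need; in short, your write-up supplies the routine verifications the paper elides, using only the lemmas already established.
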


\section{Construction of a dualizable $ 1 $-cell in $\textbf{Fun}(\mcal C, \mcal D)$}\label{2transformation}

In this section, we construct a suitable dualizable $ 2 $-transformation $\phi$ from $F$ to $G$. We will follow the notations as in \Cref{2functor}.

\vspace*{2mm}

For $a, \ b  \in \mcal C_0$ and $_bX_a \in \mcal C_1(a,b)$, define $$\phi_a \coloneqq x_a \in \mcal D_1(Fa,Ga) \ \t{and} \ \phi_X \coloneqq 
\raisebox{-12mm}{
} $. The first equality follows from \Cref{phidualrem}. Hence, we have the result 
\end{proof}

\section{Isomorphism of Q-systems}\label{2modification}
In this section we apply the results from previous sections to build an unitary between the $ 1 $-cells $\left(\psi_\bullet,m_\bullet,i_\bullet\right)$ and $\ol \phi \otimes \phi$ in $\textbf{Fun}(\mcal C,\mcal D)$. We show that $\left\{\gamma^{(a)} : \ol \phi_a \boxtimes \phi_a \to \psi_a \right\}_{a \in \mcal C_0}$ is an unitary $ 2 $-modification intertwining the algebra maps.

\begin{prop}\label{gammaalg}
	$\left\{\gamma^{(a)} : \ol \phi_a \boxtimes \phi_a \to \psi_a \right\}_{a \in \mcal C_0}$ intertwines the algebra maps.
\end{prop}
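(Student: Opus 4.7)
The plan is to reduce the claim to the identities already recorded in \Cref{gammaisoeqn}. The key observation is that the Q-system structure on $\ol\phi \otimes \phi$ in $\t{End}_{\textbf{Fun}(\mcal C, \mcal D)}(F)$ is obtained componentwise from the structures on $\ol x_a \boxtimes x_a$ in $\t{End}_{\mcal D_1}(Fa)$: by the general recipe recalled in \Cref{prelim}, the multiplication is $\t{id}_{\ol\phi} \otimes ev_\phi \otimes \t{id}_\phi$ and the unit is $coev_\phi$. Since horizontal composition of $*$-$2$-transformations in $\textbf{Fun}(\mcal C, \mcal D)$ is defined pointwise, evaluating at any $a \in \mcal C_0$ gives
$$\bigl(\t{id}_{\ol\phi} \otimes ev_\phi \otimes \t{id}_\phi\bigr)_a = \t{id}_{\ol x_a} \boxtimes ev_{\phi_a} \boxtimes \t{id}_{x_a} \qquad \t{and} \qquad (coev_\phi)_a = coev_{\phi_a},$$
where $ev_{\phi_a} : x_a \boxtimes \ol x_a \to 1_{c_a}$ and $coev_{\phi_a} : 1_{Fa} \to \ol x_a \boxtimes x_a$ are the cap and cup fixed in \Cref{2transformation} when $\ol\phi$ was shown to be a unitary separable dual of $\phi$.

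Once this identification is in place, the assertion that $\{\gamma^{(a)}\}_{a \in \mcal C_0}$ intertwines the algebra maps reduces to verifying, for each $a \in \mcal C_0$, the two componentwise equalities
$$m_a \circ \bigl(\gamma^{(a)} \boxtimes \gamma^{(a)}\bigr) = \gamma^{(a)} \circ \bigl(\t{id}_{\ol x_a} \boxtimes ev_{\phi_a} \boxtimes \t{id}_{x_a}\bigr) \qquad \t{and} \qquad i_a = \gamma^{(a)} \circ coev_{\phi_a}.$$
But these are precisely the two equations in \Cref{gammaisoeqn}, which were imposed on $\gamma^{(a)}$ as the defining data of an isomorphism of Q-systems between $\ol x_a \boxtimes x_a$ and $\psi_a$ inside $\mcal D$.

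The only step requiring any care is the matching of graphical conventions: one must confirm that the caps and cups drawn on the strands $\ol x_a, x_a$ in \Cref{gammaisoeqn} coincide with $ev_{\phi_a}$ and $coev_{\phi_a}$ coming from the dual pair $(\phi, \ol\phi)$, with the correct orientations $x_a \boxtimes \ol x_a \to 1_{c_a}$ and $1_{Fa} \to \ol x_a \boxtimes x_a$. This is immediate from the definitions in \Cref{2transformation}, so I do not expect any real obstacle: the proposition is essentially a bookkeeping step bridging two notations. The genuinely nontrivial content, namely that $\{\gamma^{(a)}\}$ assembles into a $*$-$2$-modification (i.e.\ is also compatible with $\phi_X$ and $\ol\phi_X$ for every $1$-cell $X \in \mcal C_1(a,b)$), is a separate statement and will be addressed elsewhere.
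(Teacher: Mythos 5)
Your proposal is correct and follows the same route as the paper, whose proof simply observes that the claim is immediate from \Cref{gammaisoeqn}; you have merely made explicit the (routine) intermediate step that the Q-system structure on $\ol\phi\otimes\phi$ is computed componentwise, so that the intertwining conditions at each $a\in\mcal C_0$ are exactly the two identities recorded there. You also correctly delimit the scope by deferring the $2$-modification compatibility to the separate statement (\Cref{gammamodification}).
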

\begin{proof}
	This is evident from \Cref{gammaisoeqn}
\end{proof}

\begin{prop}\label{gammamodification}
	$\left\{\gamma^{(a)} : \ol \phi_a \boxtimes \phi_a \to \psi_a \right\}_{a \in \mcal C_0}$ is a $ 2 $-modification
\end{prop}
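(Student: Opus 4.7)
The goal is to verify the two axioms for a $*$-$2$-modification of $\gamma = \{\gamma^{(a)}\}_{a \in \mcal C_0}$. The boundedness condition $\sup_{a \in \mcal C_0}\|\gamma^{(a)}\| < \infty$ is immediate since each $\gamma^{(a)}$ is unitary (so $\|\gamma^{(a)}\|=1$). The substantive content is the coherence equation \eqref{modificationeqns}, which in this setting asserts that for every $X \in \mcal C_1(a,b)$,
\[
\psi_X \cdot \bigl(\gamma^{(b)} \boxtimes \mathrm{id}_{F(X)}\bigr) \;=\; \bigl(\mathrm{id}_{F(X)} \boxtimes \gamma^{(a)}\bigr) \cdot (\ol\phi \otimes \phi)_X .
\]

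The plan is to expand the right-hand side diagrammatically. By the definition of horizontal composition of $*$-$2$-transformations, $(\ol\phi \otimes \phi)_X = \ol\phi_X \cdot (\mathrm{id}_{\ol\phi_b} \boxtimes \phi_X)$. From Section~\ref{2transformation}, $\phi_X$ is built from $u_X^*$ together with a coevaluation cup creating a pair $(\ol x_a, x_a)$ on the right, and $\ol\phi_X$ is built from $u_X$ together with an evaluation cap on the left annihilating $(\ol x_b, x_b)$. When these are composed, the $u_X$ from $\ol\phi_X$ sits directly above the $u_X^*$ from $\phi_X$, so the middle of the diagram is exactly $u_X u_X^* = p_X$, flanked by the external cups/caps and by the external $\mathrm{id}_{F(X)} \boxtimes \gamma^{(a)}$ at the top.

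Next, I substitute the defining formula of $p_X$ from Section~\ref{2functor}, in which $\gamma^{(a)}$ sits at the bottom-right (combining the input $\ol x_a$ with an internally generated $x_a$ into a $\psi_a$ strand), the transformator $\psi_X$ appears as the middle crossing of the red $\psi_b$-strand with $F(X)$, and $(\gamma^{(b)})^*$ sits at the top-left (returning $\psi_b$ to $\ol x_b \boxtimes x_b$, which is then contracted with the $x_b$ input by an internal cap). The resulting RHS diagram therefore contains two copies of $\gamma^{(a)}$ together with two $x_a$-cups, and on the other side one $(\gamma^{(b)})^*$ together with the external $x_b$-evaluation from $\ol\phi_X$.

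The final step is a graphical collapse of these redundant pieces. The two $\gamma^{(a)}$'s combined with the two $x_a$-cups reduce to a single $\gamma^{(a)}$ by an application of \Cref{Qsysgammaklem} and \Cref{Qsysgammakcor}, using the separability and Frobenius conditions of $(\psi_a,m_a,i_a)$ via \eqref{gammaisoeqn}; dually, on the $b$-side, $(\gamma^{(b)})^*$ together with the external $x_b$-evaluation collapses to $\gamma^{(b)}$ applied to the $\psi_b$-input. What remains is precisely $\psi_X \cdot (\gamma^{(b)} \boxtimes \mathrm{id}_{F(X)})$, the LHS. The main obstacle is the bookkeeping of cups, caps, and orientations in this final collapse; however, the required manipulations are of exactly the same flavor as those performed in the proofs of \Cref{Gunitary} and \Cref{Gtensor}, and ultimately reduce to the Q-system Frobenius/separability axioms transported along the $\gamma^{(a)}$ isomorphisms.
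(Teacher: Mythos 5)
Your overall strategy is sound and its first half coincides with the paper's proof: the norm bound is immediate from unitarity, and unwinding $(\ol\phi\otimes\phi)_X$ via the definitions of $\phi_X$ and $\ol\phi_X$ does reduce the coherence identity \Cref{modificationeqns} to a statement about $u_Xu_X^*=p_X$ sandwiched between the external cup, cap and $\gamma^{(a)}$. Where you diverge is the final collapse, and there your description has a gap. You substitute the \emph{defining} formula of $p_X$, in which the red crossing is $\psi_X^*$ (the $\psi_a$ strand produced by the internal $\gamma^{(a)}$ runs from bottom-right to top-left into ${\gamma^{(b)}}^*$), not $\psi_X$ as you assert; the target $\psi_X\cdot(\gamma^{(b)}\boxtimes \mathrm{id}_{F(X)})$ has the crossing in the opposite direction. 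Reversing that crossing cannot be achieved with \Cref{Qsysgammaklem}, \Cref{Qsysgammakcor} and the internal Q-system axioms of the individual $\psi_a$'s alone: those are local to each $0$-cell and say nothing about how $m$ and $i$ interact with $\psi_X$. Your $a$-side collapse is correct (the two $\gamma^{(a)}$'s and two cups give $m_a^*\cdot i_a$ by \Cref{Qsysgammakcor} and \Cref{gammaisoeqn}), and the $b$-side similarly yields $i_b^*\cdot m_b\cdot(\gamma^{(b)}\boxtimes\mathrm{id}_{\psi_b})$ via \Cref{Qsysgammaklem}(i); but what then remains is an identity of the shape
\begin{equation*}
(i_b^*m_b\boxtimes\mathrm{id})\cdot(\mathrm{id}_{\psi_b}\boxtimes\psi_X^*\boxtimes\mathrm{id}_{\psi_a})\cdot(\mathrm{id}\boxtimes m_a^*i_a)=\psi_X,
\end{equation*}
and closing it requires that $m_\bullet$ and $i_\bullet$ satisfy \Cref{modificationeqns}, i.e.\ that they are $2$-modifications, together with separability and unitality --- exactly the ingredients used in the second half of the proof of \Cref{phiunitary}. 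You should cite these explicitly; "Frobenius/separability transported along $\gamma^{(a)}$" is not enough.

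The paper avoids all of this with a shortcut you missed: by \Cref{proj1cell}, $p_X=p_X^*$, and the expanded (mirror) form of $p_X^*$ already has $\gamma^{(b)}$ at the bottom, the crossing $\psi_X$ in the correct direction, and ${\gamma^{(a)}}^*$ at the top. The external cup and cap then cancel against the internal ones by the zigzag identities, and the only remaining cancellation is $\gamma^{(a)}{\gamma^{(a)}}^*=\mathrm{id}_{\psi_a}$, which is unitarity. Your route can be completed, but it amounts to rerunning \Cref{phiunitary}; switching to $p_X^*$ makes the proof essentially immediate.
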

\begin{proof}
	For each $a \in C_0$, $\gamma^{(a)} \in \mcal D_2 (\ol \phi_a \boxtimes \phi_a , \psi_a)$ is an unitary. Hence, $\us{a \in \mcal C_0}{\t{sup}} \norm{\gamma^{(a)}} = 1$. Thus, in order to complete the proof we need to show that each $\gamma^{(a)}$ satisfy \Cref{modificationeqns}. Now, $\raisebox{-16mm}{\begin{tikzpicture}
			\draw[in=-90,out=90,looseness=2,->] (.6,0) to (1.1,1);
			\draw[in=-90,out=90,looseness=2,<-] (0,0) to (.5,1);
			\draw[in=-90,out=90,looseness=2] (1.1,0) to (0,1);
			\draw[fill=white] (.8,.5) circle(2pt);
			\draw[fill=white] (.3,.5) circle(2pt);
			\draw[red] (.8,1.6) to (.8,2.3);
			\draw (0,1) to (0,2.3); 
			\node at (-.2,-.1) {$\ol \phi_b$};
			\node at (.8,-.2) {$\phi_b$};
			\node[scale=.8] at (1.6,0) {$F(X)$};
			\node[scale=.8] at (-.5,1.8) {$F(X)$};
			\node at (1.5,.8) {$\phi_a$};
			\node at (1.1,2.1) {$\psi_a$};
			\node[draw,thick,rounded corners,fill=white] at (.8,1.35) {$\gamma^{(a)}$};
			\end{tikzpicture}} = \raisebox{-16mm}{\begin{tikzpicture}
			\draw (0,1.8) to (0,-1);
			\draw[<-] (.3,.25) to (.3,1);
			\draw (-1,.2) to (-1,-1);
			\draw[in=90,out=90,looseness=2,->] (-.3,.2) to (-1,.2);
			\draw[in=-90,out=-90,looseness=2,->] (.3,-.4) to (1.1,-.4);
			\draw (1.1,-.4) to (1.1,1);
			\draw[red] (.7,1.4) to (.7,1.8);
			\draw[<-] (-.3,-.45) to (-.3,-1);	
			\node[draw,thick,rounded corners,fill=white,minimum width=35] at (0,-.1) {$u_X u_X^*$};
			\node[scale=.8] at (.2,-1.2) {$F(X)$};
			\node[scale=.8] at (-.4,1.5) {$F(X)$};
			\node at (-1.25,-.8) {$\ol \phi_b$};
			\node at (-.5,-.8) {$\phi_b$};
			\node at (.6,.5) {$\ol \phi_a$};
			\node at (1.4,.5) {$\phi_a$};
			\node at (1,1.8) {$\psi_a$};
			\node[draw,thick,rounded corners,fill=white] at (.7,1.2) {$\gamma^{(a)}$}; 	
		\end{tikzpicture}} = \raisebox{-16mm}{\begin{tikzpicture}
		\draw (0,1.8) to (0,-1);
		\draw[<-] (.3,.25) to (.3,1);
		\draw (-1,.2) to (-1,-1);
		\draw[in=90,out=90,looseness=2,->] (-.3,.2) to (-1,.2);
		\draw[in=-90,out=-90,looseness=2,->] (.3,-.4) to (1.1,-.4);
		\draw (1.1,-.4) to (1.1,1);
		\draw[red] (.7,1.4) to (.7,1.8);
		\draw[<-] (-.3,-.45) to (-.3,-1);	
		\node[draw,thick,rounded corners,fill=white,minimum width=35] at (0,-.1) {$p_X^*$};
		\node[scale=.8] at (.2,-1.2) {$F(X)$};
		\node[scale=.8] at (-.4,1.5) {$F(X)$};
		\node at (-1.25,-.8) {$\ol \phi_b$};
		\node at (-.5,-.8) {$\phi_b$};
		\node at (.6,.5) {$\ol \phi_a$};
		\node at (1.4,.5) {$\phi_a$};
		\node at (1,1.8) {$\psi_a$};
		\node[draw,thick,rounded corners,fill=white] at (.7,1.2) {$\gamma^{(a)}$}; 	
	\end{tikzpicture}} = \raisebox{-12mm}{\begin{tikzpicture}
		\draw[in=-90,out=90,looseness=2] (.5,0) to (0,1);
		\draw (0,1) to (0,1.7);
		\draw (.5,0) to (.5,-.8);
		\draw[white,line width=1mm,in=-90,out=90,looseness=2] (0,0) to (.5,1);
		\draw[red,in=-90,out=90,looseness=2] (0,0) to (.5,1);
		\draw[red] (.7,1.3) to (.7,1.7);
		\draw[fill=white] (.25,.5) circle(2pt);
		\draw[<-] (.1,-.55) to (.1,-1.1);
		\draw[->] (-.3,-.55) to (-.3,-1.1);
		\node[draw,thick,rounded corners,fill=white] at (-.1,-.2) {$\gamma^{(b)}$};
		\node[scale=.8,draw,thick,rounded corners,fill=white] at (.8,1) {$\gamma^{(a)} {\gamma^{(a)}}^*$};
	\end{tikzpicture}} = \raisebox{-12mm}{\begin{tikzpicture}
	\draw[in=-90,out=90,looseness=2] (.5,0) to (0,1);
	\draw (0,1) to (0,1.7);
	\draw (.5,0) to (.5,-.8);
	\draw[white,line width=1mm,in=-90,out=90,looseness=2] (0,0) to (.5,1);
	\draw[red,in=-90,out=90,looseness=2] (0,0) to (.5,1);
	\draw[red] (.5,1) to (.5,1.7);
	\draw[fill=white] (.25,.5) circle(2pt);
	\draw[<-] (.1,-.55) to (.1,-1.1);
	\draw[->] (-.3,-.55) to (-.3,-1.1);
	\node[draw,thick,rounded corners,fill=white] at (-.1,-.2) {$\gamma^{(b)}$};
	\node[scale=.8] at (1,-.6) {$F(X)$};
	\node[scale=.8] at (-.5,1.5) {$F(X)$};
	\node at (.8,1.5) {$\psi_a$};
	\node at (-.6,-1) {$\ol \phi_b$};
	\node at (.35,-1) {$\phi_b$};
\end{tikzpicture}}$. The first equality is an easy application of definitions of $\phi_X$ and $\ol \phi_X$. Since $u_X u_X^* = p_X = p_X^*$, we have the second equality. The last equality follows from the unitarity of $\gamma^{(a)}$'s. Thus, $\gamma : \ol \phi \otimes \phi \Rrightarrow \psi$ is a unitary $ 2 $-modification 
\end{proof}

Using \Cref{gammaalg} and \Cref{gammamodification} we get the following

\begin{thm}\label{Qsysiso}
	$\left(\psi_\bullet, m_\bullet,i_\bullet\right)$ and $\ol \phi \otimes \phi$ are isomorphic as Q-systems in ${\normalfont\t{End}_{\textbf{Fun}(\mcal D, \mcal D)}}(F)$
\end{thm}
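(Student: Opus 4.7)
The plan is to observe that the theorem is an immediate assembly of Propositions~\ref{gammaalg} and~\ref{gammamodification} together with the unfolding of what it means to be an isomorphism of Q-systems in the 2-category $\textbf{QSys}(\text{End}_{\textbf{Fun}(\mcal C, \mcal D)}(F))$. In other words, all of the substantive work has already been done; what remains is just to check that the family $\gamma = \{\gamma^{(a)}\}_{a \in \mcal C_0}$ satisfies the three conditions (unitary, intertwines multiplication, intertwines unit) simultaneously at the level of $\textbf{Fun}(\mcal C, \mcal D)$.

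Unfolding the definition, a Q-system isomorphism is a unitary 2-cell in the ambient 2-category that intertwines the multiplication and the unit of the two Q-system structures. In the present setting the ambient 2-category is $\textbf{Fun}(\mcal C, \mcal D)$, so a 2-cell between $\bar\phi \otimes \phi$ and $\psi$ (viewed as 1-cells $F \Rightarrow F$ in $\text{End}(F)$) is precisely a $*$-2-modification. Proposition~\ref{gammamodification} supplies $\gamma$ as a unitary $*$-2-modification $\gamma : \bar\phi \otimes \phi \Rrightarrow \psi$, and Proposition~\ref{gammaalg}, read through \eqref{gammaisoeqn}, supplies the componentwise intertwining of multiplication and unit: at each $a \in \mcal C_0$, the 2-cell $\gamma^{(a)}$ carries the canonical Q-system structure on $\bar x_a \boxtimes x_a = \bar\phi_a \boxtimes \phi_a$ (described in Remark~\ref{Qdual}) to the given Q-system structure $(\psi_a, m_a, i_a)$.

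Combining these, $\gamma$ is a unitary $*$-2-modification intertwining both algebra maps, hence is an isomorphism of Q-systems in $\text{End}_{\textbf{Fun}(\mcal C, \mcal D)}(F)$. There is no real obstacle in this final assembly: the hard steps (constructing $G$, the splitting isometries $u_X$, the dual transformation $\bar\phi$, the 2-modification axioms, and the algebra-intertwining identities~\eqref{gammaisoeqn}) have already been carried out in Sections~\ref{2functor}--\ref{2modification}. The only small subtlety worth spelling out is that the Q-system structure on $\bar\phi \otimes \phi$ coming from the unitary separable duality of $\phi$ agrees pointwise with the canonical Q-system structure on $\bar x_a \boxtimes x_a$ used when invoking \eqref{gammaisoeqn}; this is immediate from the formulas $(\bar\phi \otimes \phi)_a = \bar x_a \boxtimes x_a$ together with the definitions of the evaluation and coevaluation in $\textbf{Fun}(\mcal C, \mcal D)$ in terms of those in $\mcal D$.
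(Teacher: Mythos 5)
Your proposal matches the paper exactly: the paper derives Theorem~\ref{Qsysiso} directly from Propositions~\ref{gammaalg} and~\ref{gammamodification}, with no further argument beyond the assembly you describe. Your extra remark that the Q-system structure on $\ol\phi\otimes\phi$ agrees componentwise with the canonical one on $\ol x_a\boxtimes x_a$ is a reasonable point the paper leaves implicit, but it does not change the route.
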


This settles \Cref{maintheorem} by using \cite[Theorem 3.36]{CPJP}

\begin{cor}
	Suppose $\mcal D$ is an idempotent complete $C^*$-$ 2 $-category. We have the following.
	\begin{itemize}
		\item [(i)] $\mcal D$ is Q-system complete if and only if $\normalfont \textbf{Fun}(\mcal D,\mcal D)$ is Q-system complete
		
		\item [(ii)] $\mcal D$ is Q-system complete if and only if for every $C^*$-$ 2 $-category $\mcal C$, $\normalfont \textbf{Fun}(\mcal C, \mcal D)$ is Q-system complete 
	\end{itemize}
\end{cor}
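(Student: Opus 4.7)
The backward implications in both (i) and (ii) are immediate from \Cref{maintheorem}: if $\mcal D$ is Q-system complete then $\textbf{Fun}(\mcal C,\mcal D)$ is Q-system complete for every strict C*-$2$-category $\mcal C$, in particular for $\mcal C = \mcal D$. So it suffices to address the forward directions.

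For the forward direction of (ii), the plan is to specialise to $\mcal C = \ast$, the terminal strict C*-$2$-category with a single $0$-cell, only the identity $1$-cell, and only the identity $2$-cell. A $*$-$2$-functor $F : \ast \to \mcal D$ is determined by $F(\ast) \in \mcal D_0$; a $*$-$2$-transformation between two such functors is determined by a single $1$-cell in $\mcal D$, since the coherence unitary $\phi_{1_\ast}$ is forced to be an identity by strictness of $\mcal D$; and a $*$-$2$-modification is determined by a single $2$-cell in $\mcal D$. Unpacking the horizontal and vertical compositions shows that evaluation at $\ast$ gives a $*$-$2$-equivalence $\textbf{Fun}(\ast,\mcal D) \simeq \mcal D$. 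Since Q-system completeness is preserved by $*$-$2$-equivalences, Q-system completeness of $\textbf{Fun}(\ast,\mcal D)$ forces Q-system completeness of $\mcal D$.

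For the forward direction of (i), the plan is to transfer Q-system splittings from $\textbf{Fun}(\mcal D,\mcal D)$ down to $\mcal D$ using constant $*$-$2$-functors. Given a Q-system $(Q,m,i) \in \mcal D_1(b,b)$, let $F_b : \mcal D \to \mcal D$ be the $*$-$2$-functor sending every $0$-cell to $b$, every $1$-cell to $1_b$, every $2$-cell to $\t{id}_{1_b}$, with tensorator and unitor equal to $\t{id}_{1_b}$; strictness of $\mcal D$ guarantees that this is a genuine $*$-$2$-functor. Define $\psi \in \t{End}_{\textbf{Fun}(\mcal D,\mcal D)}(F_b)$ by $\psi_a := Q$ for each $a \in \mcal D_0$ and $\psi_X := \t{id}_Q$ for each $1$-cell $X$; the coherence axioms for a $*$-$2$-transformation all collapse to identities. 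Setting $m_\bullet := m$ and $i_\bullet := i$ componentwise gives $*$-$2$-modifications because the condition \eqref{modificationeqns} also collapses to identities in this constant setting, so $(\psi,m_\bullet,i_\bullet)$ is a Q-system in $\t{End}_{\textbf{Fun}(\mcal D,\mcal D)}(F_b)$.

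By hypothesis $\textbf{Fun}(\mcal D,\mcal D)$ is Q-system complete, so by \cite[Theorem 3.36]{CPJP} the Q-system $\psi$ splits: there exist a $*$-$2$-functor $G : \mcal D \to \mcal D$, a $1$-cell $\phi \in \textbf{Fun}(\mcal D,\mcal D)_1(F_b,G)$ with unitary separable dual $(\ol\phi, ev_\phi, coev_\phi)$, and an isomorphism of Q-systems $\ol\phi \otimes \phi \cong \psi$. Evaluating at $b \in \mcal D_0$ produces $\phi_b \in \mcal D_1(b, G(b))$ with dual $\ol\phi_b \in \mcal D_1(G(b),b)$; the unitary-separability identity $ev_\phi \circ ev_\phi^\ast = \t{id}_{1_{F_b}}$ restricts at $b$ to $(ev_\phi)_b \circ (ev_\phi)_b^\ast = \t{id}_{1_b}$, and the Q-system isomorphism restricts to $\ol\phi_b \us{G(b)}\boxtimes \phi_b \cong Q$ in $\mcal D_1(b,b)$. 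Hence $Q$ splits in $\mcal D$ and \cite[Theorem 3.36]{CPJP} once more yields that $\mcal D$ is Q-system complete. The main obstacle is the coherence bookkeeping for $F_b$, $\psi$, $m_\bullet$, $i_\bullet$ and the verification that evaluation at $b$ respects the splitting data; these are routine in the strict setting but must be checked.
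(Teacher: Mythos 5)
Your part (i) is essentially the paper's own argument: the same constant $*$-$2$-functor $F_b$, the same constant $2$-transformation $\psi$ with $\psi_a = Q$ and $\psi_X = 1_Q$, the same constant modifications $\widetilde m, \widetilde i$, followed by splitting in $\textbf{Fun}(\mcal D,\mcal D)$ and evaluating at $b$; that direction is fine. For part (ii) you take a genuinely different route: the paper simply deduces (ii) from (i) by specialising the hypothesis to $\mcal C = \mcal D$, whereas you specialise to the terminal $2$-category $\ast$ and invoke an equivalence $\textbf{Fun}(\ast,\mcal D)\simeq\mcal D$ together with invariance of Q-system completeness under $*$-$2$-equivalence. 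Your route is conceptually cleaner (it isolates why evaluation detects Q-system completeness) but costs two extra verifications the paper avoids: first, that Q-system completeness is preserved by $*$-$2$-equivalences (true, and implicit in the $3$-functoriality of $\textbf{QSys}$ from \cite{CP}, but it should be cited or argued); second, and more delicately, the paper's standing convention is that every C*-$2$-category is locally orthogonal projection complete, and the bare terminal $2$-category $\ast$ with only $\t{id}_{1_\ast}$ is \emph{not} (the zero projection in $\t{End}(1_\ast)\cong\C$ has no splitting without a zero $1$-cell), so $\ast$ may not fall under the quantifier ``for every C*-$2$-category $\mcal C$'' as the paper reads it. This is repairable by adjoining a zero $1$-cell to $\ast$, or avoided entirely by doing what the paper does and taking $\mcal C = \mcal D$, which your own proof of (i) already handles. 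Also note two small inaccuracies in your description of $\textbf{Fun}(\ast,\mcal D)$: a $*$-$2$-functor out of $\ast$ carries the extra data $F(1_\ast)$ and the unitary $F^1_\ast$, and $\phi_{1_\ast}$ is not literally the identity but is \emph{determined} by $F^1$, $G^1$ and $\phi_\ast$ via the unit coherence axiom; neither affects the conclusion that evaluation at $\ast$ is a $*$-$2$-equivalence, but the words ``forced to be an identity by strictness of $\mcal D$'' are not quite right.
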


\begin{proof}\
	
	\begin{itemize}
		\item [(i)] If $\mcal D$ is Q-system complete, then obviously by \Cref{maintheorem}, $\normalfont \textbf{Fun}(\mcal D,\mcal D)$ is Q-system complete.
		
		Conversely, suppose $\normalfont \textbf{Fun}(\mcal D,\mcal D)$ is Q-system complete and $\left(_bQ_b,m,i\right)$ be a Q-system in $\mcal D$. We show that $Q$ `splits' in $\mcal D$. First we define a $ 2 $-functor $F : \mcal D \to \mcal D $ as follows.
		\[F(a) \coloneqq b \ \  \forall a \in \mcal D_0 \ , \ F(X) \coloneqq 1_b \ \ \forall X \in \mcal D_1(c,d) \ , \ F(f) \coloneqq \t{id}_{1_b} \ \ \forall f \in \mcal D_2(X,Y)  \]
		We now define a $ 2 $-transformation $\psi : F \Rightarrow F$  and $ 2 $-modifications $\widetilde m : \psi \boxtimes \psi \Rrightarrow \psi$ and $\widetilde i : 1_F \Rrightarrow \psi$ as follows.
		\[\psi_a \coloneqq \,_bQ_b \ \ \forall a \in \mcal D_0 \ , \ \psi_X \coloneqq 1_Q \]
		\[ \widetilde m_a \coloneqq m \ , \ \widetilde i_a \coloneqq i \ \ \t{for all} \ \ a \in \mcal D_0 \]
		It is straightforward to verify that $\psi : F \Rightarrow F$ is a $ 2 $-transformation and $\widetilde m$ and $\widetilde i$ are $ 2 $-modifications. Also it is easy to see that $\left(\psi, \widetilde m, \widetilde i\right)$ is a Q-system in ${\normalfont\t{End}_{\textbf{Fun}(\mcal D, \mcal D)}}(F)$. Now, since $\textbf{Fun}(\mcal D, \mcal D)$ is Q-system complete, we have a $ 2 $-functor $G : \mcal D \to \mcal D$ and $ 2 $-transformation $\phi : F \Rightarrow G$ having a unitary separable dual such that, $\psi$ is isomorphic to $\ol \phi \boxtimes \phi$ as Q-systems in ${\normalfont\t{End}_{\textbf{Fun}(\mcal D, \mcal D)}}(F)$.
		Thus, $\psi_a \simeq \ol \phi_a \us{G(a)} \boxtimes \phi_a$ for every $a \in \mcal D_0$. In particular, $_bQ_b \simeq _b \ol \phi \us{G(b)} \boxtimes \phi_b $ as Q-systems. Thus, every Q-system in $\mcal D$ splits. Therefore, by \cite[Theorem 3.36]{CPJP} $\mcal D$ is Q-system complete
		
		\item [(ii)] An easy application of \Cref{maintheorem} and (i) concludes (ii)  
	\end{itemize}
\end{proof}
\begin{rem}
	Although we have assumed $\mcal C$ and $\mcal D$ to be strict C*-2-categories, \Cref{Qsysiso} and hence, \Cref{maintheorem} will also be true if the strictness condition is dropped. The proof is almost verbatim and involves repeated application of \Cref{projprop}
\end{rem}

We refer the reader to \cite[Definition 3.2]{CPJ} for the details of the $ 2 $-category $\textbf{C*Alg}_{\mcal C}$ of actions of $\mcal C$ on C*-algebras, where $\mcal C$ is a unitary fusion category.
\begin{cor}
	Suppose $\mcal C$ is a unitary tensor category. The $C^*$-$ 2 $-category of $\mcal C$ action on $C^*$-algebras $ \normalfont\textbf{C*Alg}_{\mcal C}$ is Q-system complete 
\end{cor}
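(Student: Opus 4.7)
The plan is to realise $\textbf{C*Alg}_{\mcal C}$ as a functor $2$-category of the form covered by \Cref{maintheorem} and then apply that theorem directly. Concretely, by \cite[Definition 3.2]{CPJ}, an object of $\textbf{C*Alg}_{\mcal C}$ is a (unital) C*-algebra $A$ together with a $*$-$2$-functor $\mcal C \to \textbf{Bim}(A)$, with $1$-cells and $2$-cells the evident equivariant bimodules and intertwiners. Packaging this data globally, one has a $*$-$2$-equivalence $\textbf{C*Alg}_{\mcal C} \simeq \textbf{Fun}(\mcal C, \textbf{C*Alg})$, where $\textbf{C*Alg}$ denotes the strict C*-$2$-category of unital C*-algebras, C*-correspondences, and bounded adjointable intertwiners. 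Under this identification, $*$-$2$-transformations and $*$-$2$-modifications in the sense of \Cref{prelim} match exactly the equivariant bimodules and intertwiners used in \cite{CPJ}.

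Given this identification, the corollary reduces to two ingredients. First, one needs that $\textbf{C*Alg}$ is Q-system complete. This is the prototypical example of Q-system completeness and is established in \cite{CPJP}: every Q-system in $\textbf{C*Alg}$, i.e.\ every unitary Frobenius algebra in some $\textbf{Bim}(A)$, splits as $_B X \us{A} \boxtimes \ol X_B$ for a dualizable C*-correspondence $X$ between unital C*-algebras, via the standard construction of the Jones basic construction / realisation algebra. Second, $\textbf{C*Alg}$ is locally orthogonal projection complete, since any orthogonal projection in the intertwiner space of a C*-correspondence has a unitary splitting onto its image as a sub-correspondence; this is a routine Hilbert C*-module argument.

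With both of these in hand, \Cref{maintheorem} applies with $\mcal D = \textbf{C*Alg}$, giving Q-system completeness of $\textbf{Fun}(\mcal C, \textbf{C*Alg}) \simeq \textbf{C*Alg}_{\mcal C}$. The main thing to watch is that the identification in the first paragraph is compatible with the horizontal and vertical compositions used in \Cref{prelim}; this is a matter of unwinding definitions in \cite{CPJ} and comparing them with the formulas for $\otimes$ and $\circ$ in $\textbf{Fun}(\mcal C, \mcal D)$, rather than any serious obstacle. I do not anticipate needing anything beyond \Cref{maintheorem} and the (essentially known) Q-system completeness of $\textbf{C*Alg}$; in particular, the hypothesis that $\mcal C$ is a unitary \emph{fusion} category plays no role in the proof beyond ensuring that $\mcal C$ is a strict C*-$2$-category (after delooping) so that \Cref{maintheorem} is directly applicable.
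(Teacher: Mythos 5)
Your proposal is correct and follows essentially the same route as the paper: identify $\textbf{C*Alg}_{\mcal C}$ with $\textbf{Fun}(\t{B}\mcal C, \textbf{C*Alg})$ (the paper cites \cite[Remark 3.3]{CPJ} for this, where $\t{B}\mcal C$ is the delooping you mention) and then apply \Cref{maintheorem} with $\mcal D = \textbf{C*Alg}$. Your additional remarks spelling out that $\textbf{C*Alg}$ is Q-system complete and locally orthogonal projection complete are exactly the hypotheses the paper leaves implicit when invoking the main theorem, so there is no substantive difference.
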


\begin{proof}
	The proof follows from the observation \cite[Remark 3.3]{CPJ} that $\textbf{C*Alg}_{\mcal C} = \textbf{Fun}(\t{B}\mcal C, \textbf{C*Alg})$  and \Cref{maintheorem}, where $\t{B} \mcal C$ is the $ 2 $-category with one object and whose endomorphisms are the category $\mcal C$ and $\textbf{C*Alg}$ is C*-2-category of C*-algebras, bimodules and intertwiners
\end{proof}

\comments{\draw[red,in=-90,out=90,looseness=2] (0,0) to (2,2);
	\draw[in=-90,out=90,looseness=2] (.7,0) to (.3,2);
	\draw[in=-90,out=90,looseness=2] (1.5,0) to (1.1,2);
	\draw[fill=white] (.5,1) circle(2pt);
	\draw[fill=white] (1.3,1) circle(2pt);
	\draw[in=90,out=90,looseness=2,<-] (.9,0) to (1.3,0);}

\Contact


\end{document}